\providecommand{\U}[1]{\protect\rule{.1in}{.1in}}
\newtheorem{theorem}{Theorem}[section]
\newtheorem{definition}[theorem]{Definition}
\newtheorem{assumption}[theorem]{Assumption}
\newtheorem{lemma}[theorem]{Lemma}
\newtheorem{remark}[theorem]{Remark}
\newenvironment{proof}[1][Proof]{\noindent \textbf{#1.} }{\  \rule{0.5em}{0.5em}}
\numberwithin{equation}{section}
\begin{document}

\title{Maximum principle for stochastic optimal control problem of forward-backward
stochastic difference systems}
\author{Shaolin Ji\thanks{Zhongtai Securities Institute for Financial Studies,
Shandong University, Jinan, Shandong 250100, PR China. jsl@sdu.edu.cn.
Research supported by NSF (No. 11571203).}
\and Haodong Liu\thanks{Zhongtai Securities Institute for Financial Studies,
Shandong University, Jinan, Shandong 250100, PR China. (Corresponding
author).}}
\maketitle

\textbf{Abstract}: In this paper, we study the maximum principle for
stochastic optimal control problems of forward-backward stochastic difference
systems (FBS$\Delta$Ss). Two types of FBS$\Delta$Ss are investigated. The
first one is described by a partially coupled {forward-backward stochastic
difference equation (}FBS$\Delta$E) and the second one is described by a fully
coupled FBS$\Delta$E. By adopting an appropriate representation of the product
rule and an appropriate formulation of the {backward stochastic difference
equation (}BS$\Delta$E), we deduce the adjoint difference equation. Finally,
the maximum principle for this optimal control problem with the control domain
being convex is established.

{\textbf{Keywords}: backward stochastic difference equations; forward-backward
stochastic difference equations; monotone condition; stochastic optimal
control; }maximum principle

\addcontentsline{toc}{section}{\hspace*{1.8em}Abstract}

\section{Introduction}

The Maximum Principle is one of the principal approaches in solving the
optimal control problems. A lot of work has been done on the Maximum Principle
for forward stochastic system. See, for example, Bensoussan \cite{b82}, Bismut
\cite{b78}, Kushner \cite{k72}, Peng \cite{p90}. Peng also firstly studied one
kind of forward-backward stochastic control system (FBSCS) in \cite{p93} and
obtained the maximum principle for this kind of control system with control
domain being convex. The FBSCSs have wide applications in many fields. As the
stochastic differential recursive utility, which is a generalization of a
standard additive utility, can be regarded as a solution of a {backward
stochastic differential equation (}BSDE). The recursive utility optimization
problem can be described by a optimization problem for a FBSCS (see
\cite{ss99}). Besides, in the dynamic principal-agent problem with
unobservable states and actions, the principal's problem can be formulated as
a partial information optimal control problem of a FBSCS (see \cite{w09}). We
refer to \cite{dz99}, \cite{hwx09}, \cite{lz01},\ \cite{ww09}, \cite{w13},
\cite{x95}\ for other works on optimization problems for FBSCSs.

In this paper, we will discuss the Maximum Principle for optimal control of
discrete time systems described by forward-backward stochastic difference
equations (FBS$\Delta$Es). To the best of our knowledge, there are few results
on such optimization control problems. In fact, the discrete time control
systems are of great value in practice. For example, the digital control can
be formulated as discrete time control problems, where the sampled data is
obtained at discrete instants of time. Besides, the forward-backward
stochastic difference system (FBS$\Delta$S) can be used for modeling in
financial markets. For example, the solution to the backward stochastic
difference equation (BS$\Delta$E) can be used to construct time-consistent
nonlinear expectations (see \cite{ce10+}, \cite{ce11}) and be used for pricing
in the financial markets (see \cite{bcc15}). However, the formulation of
BS$\Delta$E is quite different from its continuous time counterpart. Many
works are devoted to the study of BS$\Delta$Es (see, e.g. \cite{bcc15},
\cite{ce10+}, \cite{ce11}, \cite{s10}). Based on the driving process, there
are mainly two types of formulations of BS$\Delta$Es. One is driving by a
finite state process which takes values from the basis vectors (as in
\cite{ce10+}) and the other is driving by a martingale with independent
increments (as in \cite{bcc15}). For the latter case, the solution of the
BS$\Delta$E is a triple of processes which is due to the discrete time version
of the Kunita--Watanabe decomposition. In this paper, we adopt the second type
of formulation to investigate the optimization problems for FBS$\Delta$Ss.

Let\ $\left(  \Omega,\mathcal{F},\left\{  \mathcal{F}_{t}\right\}  _{0\leq
t\leq T},P\right)  $\ be a probability space, and\ $W_{t}$ be a martingale
process with independent increments. Define the difference operator $\Delta$
as $\Delta V_{t}=V_{t+1}-V_{t}$. Here we consider two types of controlled
FBS$\Delta$Ss.

Problem 1 (partially coupled system):

The controlled system is%
\begin{equation}
\left\{
\begin{array}
[c]{rcl}%
\Delta X_{t} & = & b\left(  t,X_{t},u_{t}\right)  +\sum_{i=1}^{d}\sigma
_{i}\left(  t,X_{t},u_{t}\right)  \Delta W_{t}^{i},\\
X_{0} & = & x_{0},\\
\Delta Y_{t} & = & -f\left(  t+1,X_{t+1},Y_{t+1},Z_{t+1},u_{t+1}\right)
+Z_{t}\Delta W_{t}+\Delta N_{t},\\
Y_{T} & = & y_{T},
\end{array}
\right.  \label{c5_eq_w_pc_state_eq}%
\end{equation}

and the cost functional is%
\begin{equation}
J\left(  u\left(  \cdot\right)  \right)  =\mathbb{E}\left[  \sum_{t=0}%
^{T-1}l\left(  t,X_{t},Y_{t},Z_{t},u_{t}\right)  +h\left(  X_{T}\right)
\right]  . \label{c5_eq_w_pc_cost_eq}%
\end{equation}

Problem 2 (fully coupled system):

The controlled system is:%
\begin{equation}
\left\{
\begin{array}
[c]{rcl}%
\Delta X_{t} & = & b\left(  t,X_{t},Y_{t},Z_{t},u_{t}\right)  +\sum_{i=1}%
^{d}\sigma_{i}\left(  t,X_{t},Y_{t},Z_{t},u_{t}\right)  \Delta W_{t}^{i},\\
X_{0} & = & x_{0},\\
\Delta Y_{t} & = & -f\left(  t+1,X_{t+1},Y_{t+1},Z_{t+1},u_{t+1}\right)
+Z_{t}\Delta W_{t}+\Delta N_{t},\\
Y_{T} & = & y_{T},
\end{array}
\right.  \label{fbsde_state_eq}%
\end{equation}

and the cost functional is%
\begin{equation}
J\left(  u\left(  \cdot\right)  \right)  =\mathbb{E}\left[  \sum_{t=0}%
^{T-1}l\left(  t,X_{t},Y_{t},Z_{t},u_{t}\right)  +h\left(  X_{T}\right)
\right]  . \label{fbsde_cost_eq}%
\end{equation}

Let $\left\{  U_{t}\right\}  _{t\in\left\{  0,1,...,T-1\right\}  }$ be a
sequence of nonempty convex subset of $\mathbb{R}^{r}$. We denote the set of
admissible controls $\mathcal{U}$ by $\mathcal{U}=\left\{  u\left(
\cdot\right)  \in\mathcal{M}^{2}\left(  0,T-1;\mathbb{R}^{r}\right)  |u\left(
t\right)  \in U_{t}\right\}  .$ It can be seen that in Problem 1, $b$ and
$\sigma$ do not contain the solution $(Y,Z)$ of the backward equation. This
kind of FBS$\Delta$E is called the partially coupled FBS$\Delta$E. Meanwhile,
the system in Problem 2 is called the fully coupled FBS$\Delta$E.

The optimal control problem is to find the optimal control $u\in\mathcal{U}$,
such that the optimal control and the corresponding state trajectory can
minimize the cost functional $J\left(  u\left(  \cdot\right)  \right)  $. In
this paper, we assume the control domain is convex. By making the perturbation
of the optimal control at a fixed time point, we obtain the maximum principle
for problem 1 and 2.

To build the maximum principle, the key step is to find the adjoint variables
which can be applied to deduce the variational inequality. In \cite{lz15}, the
authors studied the maximum principle for a discrete time stochastic optimal
control problem in which the state equation is only governed by a forward
stochastic difference equation. By applying the Riesz representation theorem,
they explicitly obtained the adjoint variables and establish the maximum
principle. But to solve our problems, we need to construct the adjoint
difference equations since generally the adjoint variables can not be obtained
explicitly for our case. To construct the adjoint equations in our discrete
time framework, the techniques which are adopted for the continuous time
framework as in \cite{p90,p93} are not appliable. In this paper, we propose
two techniques to deduce the adjoint difference equations. The first one is
that we choose the following product rule:%
\[
\Delta\left\langle X_{t},Y_{t}\right\rangle =\left\langle X_{t+1},\Delta
Y_{t}\right\rangle +\left\langle \Delta X_{t},Y_{t}\right\rangle
\]
where $X_{t}$ (resp. $Y_{t}$) subjects to a forward (resp. backward)
stochastic difference equation. The second one is that the BS$\Delta$E should
be formulated as in (\ref{bsde_form1}). In other words, the generator $f$ of
the BS$\Delta$E (\ref{bsde_form1}) depends on time $t+1$. It is worth pointing
out that this kind of formulation is just the formulation of the adjoint
equations for stochastic optimal control problems (see \cite{lz15} for the
classical case). Based on these two techniques, we can deduce the adjoint
difference equations. The readers may refer to Remark \ref{re-product-rule}
for more details.

The remainder of this paper is organized as follows. In section 2, two types
of the controlled FBS$\Delta$Ss are formulated. We deduce the maximum
principle for the partially coupled controlled FBS$\Delta$S in section 3.
Finally, we establish the maximum principle for the fully coupled controlled
FBS$\Delta$S in section 4.

\section{Preliminaries and model formulation}

Let $T$ be a deterministic terminal time, and let $\mathcal{T}:=\left\{
0,1,...,T\right\}  $. Consider a filtered probability space $\left(
\Omega,\mathcal{F},\left\{  \mathcal{F}_{t}\right\}  _{0\leq t\leq
T},P\right)  $, with $\mathcal{F}_{0}=\left\{  \emptyset,\Omega\right\}  $ and
$\mathcal{F=F}_{T}$. Here we define the difference operator $\Delta$ as
$\Delta U_{t}=U_{t+1}-U_{t}$. Let $W$ be a fixed $\mathbb{R}^{d}$-valued
square integrable martingale process with independent increments, i.e.
$\mathbb{E}\left[  \Delta W_{t}|\mathcal{F}_{t}\right]  =\mathbb{E}\left[
\Delta W_{t}\right]  =0$ for any $t\in\left\{  0,...,T-1\right\}  $. Also we
suppose that $\mathbb{E}\left[  \Delta W_{t}\left(  \Delta W_{t}\right)
^{\ast}\right]  =I_{d}$ for any $t\in\left\{  0,...,T-1\right\}  $. Here
$\left(  \cdot\right)  ^{\ast}$ denotes vector transposition. We assume that
$\mathcal{F}_{t}$ is the completion of the $\sigma$-algebra generated by the
process $W$ up to time $t$.

Denote by $L^{2}\left(  \mathcal{F}_{t};\mathbb{R}^{n}\right)  $ the set of
all $\mathcal{F}_{t}-$measurable square integrable random variable $X_{t}$
taking values in $\mathbb{R}^{n}$ and by $\mathcal{M}^{2}\left(
0,t;\mathbb{R}^{n}\right)  $ the set of all $\left\{  \mathcal{F}_{s}\right\}
_{0\leq s\leq t}$-adapted square integrable process $X$ taking values in
$\mathbb{R}^{n}$. Moreover, we define $e_{i}=\left(
0,0,...,0,1,0,...,0\right)  ^{\ast}\in\mathbb{R}^{n}$ and mention that an
inequality on a vector quantity is to hold componentwise.

Consider the following backward stochastic difference equation (BS$\Delta$E):%

\begin{equation}
\left\{
\begin{array}
[c]{rcl}%
\Delta Y_{t} & = & -f\left(  t+1,Y_{t+1},Z_{t+1}\right)  +Z_{t}\Delta
W_{t}+\Delta N_{t},\\
Y_{T} & = & \eta,
\end{array}
\right.  \label{bsde_form1}%
\end{equation}
where $\eta\in L^{2}\left(  \mathcal{F}_{T};\mathbb{R}^{n}\right)  $,
$f:\Omega\times\left\{  1,2,...,T\right\}  \times\mathbb{R}^{n}\times
\mathbb{R}^{n\times d}\longmapsto\mathbb{R}^{n}$.

\begin{assumption}
\label{generator_assumption}A1. The function $f\left(  t,y,z\right)  $ is
uniformly Lipschitz continuous and independent of $z$ at $t=T$, i.e. there
exists constants $c_{1},c_{2}>0$, such that for any $t\in\left\{
1,2,...,T-1\right\}  $, $y_{1},y_{2}\in\mathbb{R}^{n}$, $z_{1},z_{2}%
\in\mathbb{R}^{n\times d}$,%
\begin{align*}
\left\vert f\left(  T,y_{1},z_{1}\right)  -f\left(  T,y_{2},z_{2}\right)
\right\vert  &  \leq c_{1}\left\vert y_{1}-y_{2}\right\vert ,\\
\left\vert f\left(  t,y_{1},z_{1}\right)  -f\left(  t,y_{2},z_{2}\right)
\right\vert  &  \leq c_{1}\left\vert y_{1}-y_{2}\right\vert +c_{2}\left\Vert
z_{1}-z_{2}\right\Vert ,\text{ }P-a.s.
\end{align*}

A2. $f\left(  t,0,0\right)  \in L^{2}\left(  \mathcal{F}_{t};\mathbb{R}%
^{n}\right)  $ for any $t\in\left\{  1,2,...,T\right\}  $.
\end{assumption}

\begin{remark}
The BS$\Delta$E (\ref{bsde_form1}) is analogous to the continuous time BSDE
driven by a general martingale (cf. \cite{eh97}), and the solution is a triple
of processes.
\end{remark}

\begin{definition}
A solution to BS$\Delta$E (\ref{bsde_form1}) is a triple of processes $\left(
Y,Z,N\right)  \in\mathcal{M}^{2}\left(  0,T;\mathbb{R}^{n}\right)
\times\mathcal{M}^{2}\left(  0,T-1;\mathbb{R}^{n\times d}\right)
\times\mathcal{M}^{2}\left(  0,T;\mathbb{R}^{n}\right)  $ which satisfies
equality (\ref{bsde_form1}) for all $t\in\left\{  0,1,...,T-1\right\}  $, and
$N$ is a martingale process strongly orthogonal to $W$.
\end{definition}

By using the Galtchouk-Kunita-Watanabe decomposition in \cite{bcc15}, we can
obtain the existence and uniqueness result of BS$\Delta$E (\ref{bsde_form1}):

\begin{theorem}
\label{bsde_result_l}Suppose that Assumption (\ref{generator_assumption})
holds. Then for any terminal condition $\eta\in L^{2}\left(  \mathcal{F}%
_{T};\mathbb{R}^{n}\right)  $, the BS$\Delta$E (\ref{bsde_form1}) has a unique
adapted solution $\left(  Y,Z,N\right)  $.
\end{theorem}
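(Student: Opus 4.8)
The plan is to solve the BS$\Delta$E by backward induction on $t$, exploiting the finite horizon so that no fixed-point or contraction argument (needed in the continuous-time theory) is required. Starting from $Y_{T}=\eta$, I would construct the triple $\left(  Y_{t},Z_{t},\Delta N_{t}\right)  $ one step at a time, descending from $t+1$ to $t$, and verify at each step that the constructed objects are forced uniquely and lie in the appropriate $L^{2}$ spaces.

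For the inductive step, suppose $\left(  Y_{s},Z_{s}\right)  $ have been determined for all $s>t$ and belong to the spaces of the definition. Set $\xi_{t}:=Y_{t+1}+f\left(  t+1,Y_{t+1},Z_{t+1}\right)  $, which is $\mathcal{F}_{t+1}$-measurable; the Lipschitz condition A1 together with A2 gives $\xi_{t}\in L^{2}\left(  \mathcal{F}_{t+1};\mathbb{R}^{n}\right)  $. Rewriting (\ref{bsde_form1}) as $Y_{t}=\xi_{t}-Z_{t}\Delta W_{t}-\Delta N_{t}$ and applying $\mathbb{E}\left[  \cdot\,|\mathcal{F}_{t}\right]  $ — using $\mathbb{E}\left[  \Delta W_{t}|\mathcal{F}_{t}\right]  =0$, the $\mathcal{F}_{t}$-measurability of $Z_{t}$, and the martingale property of $N$ — forces
\[
Y_{t}=\mathbb{E}\left[  \xi_{t}\,|\mathcal{F}_{t}\right]  .
\]
This pins down $Y_{t}$ uniquely and shows $Y_{t}\in L^{2}\left(  \mathcal{F}_{t};\mathbb{R}^{n}\right)  $.

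To recover $Z_{t}$ and $\Delta N_{t}$, I would apply the discrete Galtchouk-Kunita-Watanabe decomposition of \cite{bcc15} to the $\mathcal{F}_{t}$-conditionally centered quantity $\xi_{t}-Y_{t}$. Since $\Delta W_{t}$ is independent of $\mathcal{F}_{t}$ with $\mathbb{E}\left[  \Delta W_{t}\left(  \Delta W_{t}\right)  ^{\ast}\right]  =I_{d}$, the projection onto the increment is
\[
Z_{t}:=\mathbb{E}\left[  \left(  \xi_{t}-Y_{t}\right)  \left(  \Delta W_{t}\right)  ^{\ast}|\mathcal{F}_{t}\right]  ,
\]
which is $\mathcal{F}_{t}$-measurable and square integrable, and the residual $\Delta N_{t}:=\left(  \xi_{t}-Y_{t}\right)  -Z_{t}\Delta W_{t}$ satisfies $\mathbb{E}\left[  \Delta N_{t}\left(  \Delta W_{t}\right)  ^{\ast}|\mathcal{F}_{t}\right]  =0$, so the accumulated process $N$ is a martingale strongly orthogonal to $W$. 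Both $Z_{t}$ and $\Delta N_{t}$ are uniquely determined by this orthogonal decomposition, which simultaneously delivers existence and uniqueness at the given step.

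One point requiring care is the initialization: at $t=T-1$ the driver is evaluated at time $T$, where $Z_{T}$ is not part of the solution (since $Z\in\mathcal{M}^{2}\left(  0,T-1;\mathbb{R}^{n\times d}\right)  $). This is exactly why A1 assumes $f\left(  T,y,z\right)  $ is independent of $z$: the quantity $\xi_{T-1}=\eta+f\left(  T,\eta\right)  $ is then well defined without reference to an undefined $Z_{T}$, and the recursion starts cleanly. The main (and essentially only) technical obstacle is establishing the discrete orthogonal decomposition with the correct measurability and integrability, but this is precisely the content of the cited result of \cite{bcc15}; the remainder is a finite backward recursion in which every component is forced at each step, yielding both existence and uniqueness.
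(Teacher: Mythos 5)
Your proposal is correct and takes essentially the same route as the paper's proof: a finite backward recursion in which $Y_{t}$ is forced by taking $\mathcal{F}_{t}$-conditional expectation and $\left(Z_{t},\Delta N_{t}\right)$ is forced by the Galtchouk--Kunita--Watanabe decomposition, with the assumption that $f\left(T,\cdot\right)$ is independent of $z$ used exactly as in the paper to initialize the recursion at $t=T-1$. The only cosmetic difference is that you project the centered quantity $\xi_{t}-Y_{t}$ onto the increment rather than $\xi_{t}$ itself, which yields the same $Z_{t}$ since $\mathbb{E}\left[Y_{t}\left(\Delta W_{t}\right)^{\ast}|\mathcal{F}_{t}\right]=0$.
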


\begin{proof}
We first prove the existence and uniqueness of $\left(  Y_{T-1},Z_{T-1},\Delta
N_{T-1}\right)  $. Due to Assumption (\ref{generator_assumption}) and $\eta\in
L^{2}\left(  \mathcal{F}_{T};\mathbb{R}^{n}\right)  $, we get $f\left(
T,\eta\right)  \in L^{2}\left(  \mathcal{F}_{T};\mathbb{R}^{n}\right)  $. Here
we omit the variable $Z$ since $f$ is independent of $Z$ at time $T$. Then we
have $\mathbb{E}\left[  \left\vert \mathbb{E}\left[  \eta+f\left(
T,\eta\right)  |\mathcal{F}_{T-1}\right]  \right\vert ^{2}\right]  <\infty.$
Hence, $\eta+f\left(  T,\eta\right)  -\mathbb{E}\left[  \eta+f\left(
T,\eta\right)  |\mathcal{F}_{T-1}\right]  $ is a square integrable martingale
difference. So it admits the Galtchouk-Kunita-Watanabe decomposition, which
implies that there exists $Z_{T-1}\in\mathcal{F}_{T-1}$, $Z_{T-1}\Delta
W_{T-1}\in L^{2}\left(  \mathcal{F}_{T};\mathbb{R}^{n}\right)  $, $\Delta
N_{T-1}\in L^{2}\left(  \mathcal{F}_{T};\mathbb{R}^{n}\right)  $ such that
$\mathbb{E}\left[  \Delta N_{T-1}|\mathcal{F}_{T-1}\right]  =0$,
$\mathbb{E}\left[  e_{i}^{\ast}\Delta N_{T-1}\left(  \Delta W_{T-1}\right)
^{\ast}|\mathcal{F}_{T-1}\right]  =0$ and%
\begin{equation}
\eta+f\left(  T,\eta\right)  -\mathbb{E}\left[  \eta+f\left(  T,\eta\right)
|\mathcal{F}_{T-1}\right]  =Z_{T-1}\Delta W_{T-1}+\Delta N_{T-1}.
\label{bsde_proof_1}%
\end{equation}
Moreover, $\Delta N_{T-1}$ is uniquely determined in this decomposition. For
fixed $i\in\left\{  1,2,...,n\right\}  $, premultiply the equation by
$e_{i}^{\ast}$, postmultiply the equation by $\left(  \Delta W_{T-1}\right)
^{\ast}$ and then take the $\mathcal{F}_{T-1}$ conditional expectation. This
yields that
\[
\mathbb{E}\left[  e_{i}^{\ast}\left(  \eta+f\left(  T,\eta\right)  \right)
\left(  \Delta W_{T-1}\right)  ^{\ast}|\mathcal{F}_{T-1}\right]  =e_{i}^{\ast
}Z_{T-1}%
\]
since $\mathbb{E}\left[  \Delta W_{T-1}\left(  \Delta W_{T-1}\right)  ^{\ast
}|\mathcal{F}_{T-1}\right]  =I$. Therefore, we get the unique $Z_{T-1}$ by%
\[
Z_{T-1}=\mathbb{E}\left[  \left(  \eta+f\left(  T,\eta\right)  \right)
\left(  \Delta W_{T-1}\right)  ^{\ast}|\mathcal{F}_{T-1}\right]
\]
and%
\[%
\begin{array}
[c]{ccl}%
\mathbb{E}\left[  \left\Vert Z_{T-1}\right\Vert ^{2}\right]  & \leq &
\mathbb{E}\left[  \mathbb{E}\left[  \left\vert \eta+f\left(  T,\eta\right)
\right\vert ^{2}|\mathcal{F}_{T-1}\right]  \mathbb{E}\left[  \left\vert
\left(  \Delta W_{T-1}\right)  \right\vert ^{2}|\mathcal{F}_{T-1}\right]
\right]  <\infty.
\end{array}
\]
It leads that $Y_{T-1}=\mathbb{E}\left[  \eta+f\left(  T,\eta\right)
|\mathcal{F}_{T-1}\right]  $ and $Y_{T-1}\in L^{2}\left(  \mathcal{F}%
_{T-1};\mathbb{R}^{n}\right)  $.

Then, by similar arguments as above, we can obtain the unique solution
$\left(  Y_{t},Z_{t},\Delta N_{t}\right)  \in L^{2}\left(  \mathcal{F}%
_{t};\mathbb{R}^{n}\right)  \times L^{2}\left(  \mathcal{F}_{t};\mathbb{R}%
^{n\times d}\right)  \times L^{2}\left(  \mathcal{F}_{t};\mathbb{R}%
^{n}\right)  $ for $t\in\left\{  0,1,...,T-2\right\}  .$ Moreover,%
\begin{align*}
Z_{t}  &  =\mathbb{E}\left[  \left(  Y_{t+1}+f\left(  t+1,Y_{t+1}%
,Z_{t+1}\right)  \right)  \left(  \Delta W_{t}\right)  ^{\ast}|\mathcal{F}%
_{t}\right]  ,\\
Y_{t}  &  =\mathbb{E}\left[  Y_{t+1}+f\left(  t+1,Y_{t+1},Z_{t+1}\right)
|\mathcal{F}_{t}\right]  .
\end{align*}
By taking the convention $N_{0}=0$ and letting $N_{t}=N_{0}+\sum_{s=0}%
^{t-1}\Delta N_{s}$, we have that (\ref{bsde_form1}) holds true for all
$t\in\left\{  0,1,...,T-1\right\}  $. Finally, since%
\begin{align*}
&  \mathbb{E}\left[  e_{i}^{\ast}N_{t}\left(  W_{t}\right)  ^{\ast
}|\mathcal{F}_{t-1}\right] \\
&  =e_{i}^{\ast}\sum_{s=0}^{t-2}\Delta N_{s}\mathbb{E}\left[  \left(
W_{t}\right)  ^{\ast}|\mathcal{F}_{t-1}\right]  +\mathbb{E}\left[  e_{i}%
^{\ast}\Delta N_{t-1}\left(  W_{t-1}+\Delta W_{t-1}\right)  ^{\ast
}|\mathcal{F}_{t-1}\right] \\
&  =e_{i}^{\ast}N_{t-1}\left(  W_{t-1}\right)  ^{\ast},
\end{align*}
we conclude that $N$ is strongly orthogonal to $W$.
\end{proof}

Now we consider the control systems (\ref{c5_eq_w_pc_state_eq}%
)-(\ref{c5_eq_w_pc_cost_eq}) and (\ref{fbsde_state_eq})-(\ref{fbsde_cost_eq}).

Let the coefficients in system (\ref{c5_eq_w_pc_state_eq}%
)-(\ref{c5_eq_w_pc_cost_eq}) be such that:%
\begin{align*}
b\left(  \omega,t,x,u\right)   &  :\Omega\times\left\{  0,1,...,T-1\right\}
\times\mathbb{R}^{m}\times\mathbb{R}^{r}\mathbb{\rightarrow R}^{m},\\
\sigma_{i}\left(  \omega,t,x,u\right)   &  :\Omega\times\left\{
0,1,...,T-1\right\}  \times\mathbb{R}^{m}\times\mathbb{R}^{r}%
\mathbb{\rightarrow R}^{m},\\
f\left(  \omega,t,x,y,z,u\right)   &  :\Omega\times\left\{  1,2,...,T\right\}
\times\mathbb{R}^{m}\times\mathbb{R}^{n}\times\mathbb{\mathbb{R}}^{n\times
d}\times\mathbb{R}^{r}\mathbb{\rightarrow R}^{n},\\
l\left(  \omega,t,x,y,z,u\right)   &  :\Omega\times\left\{
0,1,...,T-1\right\}  \times\mathbb{R}^{m}\times\mathbb{R}^{n}\times
\mathbb{\mathbb{R}}^{n\times d}\times\mathbb{R}^{r}\mathbb{\rightarrow R},\\
h\left(  \omega,x\right)   &  :\Omega\times\mathbb{R}^{m}\mathbb{\rightarrow
R}.
\end{align*}
And the coefficients in system (\ref{fbsde_state_eq})-(\ref{fbsde_cost_eq}) be
such that:%
\begin{align*}
b\left(  \omega,t,x,y,z,u\right)   &  :\Omega\times\left\{
0,1,...,T-1\right\}  \times\mathbb{R}^{n}\times\mathbb{R}^{n}\times
\mathbb{\mathbb{R}}^{n\times d}\times\mathbb{R}^{r}\mathbb{\rightarrow R}%
^{n},\\
\sigma_{i}\left(  \omega,t,x,y,z,u\right)   &  :\Omega\times\left\{
0,1,...,T-1\right\}  \times\mathbb{R}^{n}\times\mathbb{R}^{n}\times
\mathbb{\mathbb{R}}^{n\times d}\times\mathbb{R}^{r}\mathbb{\rightarrow R}%
^{n},\\
f\left(  \omega,t,x,y,z,u\right)   &  :\Omega\times\left\{  1,2,...,T\right\}
\times\mathbb{R}^{n}\times\mathbb{R}^{n}\times\mathbb{\mathbb{R}}^{n\times
d}\times\mathbb{R}^{r}\mathbb{\rightarrow R}^{n},\\
l\left(  \omega,t,x,y,z,u\right)   &  :\Omega\times\left\{
0,1,...,T-1\right\}  \times\mathbb{R}^{n}\times\mathbb{R}^{n}\times
\mathbb{\mathbb{R}}^{n\times d}\times\mathbb{R}^{r}\mathbb{\rightarrow R},\\
h\left(  \omega,x\right)   &  :\Omega\times\mathbb{R}^{n}\mathbb{\rightarrow
R}.
\end{align*}

\begin{remark}
The cost functional in \cite{p93} consists of three parts: the running cost
functional, the terminal cost functional of $X_{T}$, the initial cost
functional of $Y_{0}$. In our formulation, if we take $l\left(  \omega
,0,X_{0},Y_{0},Z_{0},u_{0}\right)  =\gamma\left(  \omega,Y_{0}\right)  $, then
the cost functional (\ref{fbsde_cost_eq}) for our discrete time framework can
be reduced to the cost functional in \cite{p93} formally.
\end{remark}

For system (\ref{c5_eq_w_pc_state_eq})-(\ref{c5_eq_w_pc_cost_eq}), we assume that:

\begin{assumption}
\label{control_assumption}For $\varphi=b$, $\sigma_{i}$, $f$, $l$, $h$, we
assume that

\begin{enumerate}
\item $\varphi$\ is adapted map, i.e. for any\ $\left(  x,y,z,u\right)
\in\mathbb{R}^{m}\times\mathbb{R}^{n}\times\mathbb{\mathbb{R}}^{n\times
d}\times\mathbb{R}^{r}$, $\varphi\left(  \cdot,\cdot,x,y,z,u\right)  $ is
$\left\{  \mathcal{F}_{t}\right\}  $-adapted process. Moreover, $\varphi
\left(  \cdot,t,0,0,0,0\right)  \in L^{2}\left(  \mathcal{F}_{t}\right)  .$

\item $\forall t\in\left\{  0,1,...,T\right\}  $, $\varphi\left(
\cdot,t,\cdot,\cdot,\cdot,\cdot\right)  \,$is continuously differentiable with
respect to $x,y,z,u$, and $\varphi_{x},\varphi_{y},\varphi_{z_{i}},\varphi
_{u}$ are uniformly bounded $P-a.s.$. Also, for $t=T$, $f_{z_{i}}\equiv0$,
i.e. $f$ is independent of $z$ at time $T$. Here we use $z_{i}$ to represent
the $i$-th column of the matrix $z$.
\end{enumerate}
\end{assumption}

Let%
\[
\lambda=\left(
\begin{array}
[c]{c}%
x\\
y\\
z
\end{array}
\right)  ,A\left(  t,\lambda;u\right)  =\left(
\begin{array}
[c]{c}%
-f\\
b\\
\sigma
\end{array}
\right)  \left(  t,\lambda;u\right)  .
\]

For control system (\ref{fbsde_state_eq})-(\ref{fbsde_cost_eq}), we
additionally assume that:

\begin{assumption}
\label{control_assumption2}For any $u\in\mathcal{U}$, the coefficients in
(\ref{fbsde_state_eq}) satisfy the following monotone conditions, i.e. when
$t\in\left\{  1,...,T-1\right\}  $,%
\begin{gather*}
\left\langle A\left(  t,\lambda_{1};u\right)  -A\left(  t,\lambda
_{2};u\right)  ,\lambda_{1}-\lambda_{2}\right\rangle \leq-\alpha\left\vert
\lambda_{1}-\lambda_{2}\right\vert ^{2},P-a.s.,\\
\forall\lambda_{1},\lambda_{2}\in\mathbb{R}^{n}\times\mathbb{R}^{n}%
\times\mathbb{\mathbb{R}}^{n};
\end{gather*}

when $t=T$,%
\[
\left\langle -f\left(  T,x_{1},y,z,u\right)  +f\left(  T,x_{2},y,z,u\right)
,x_{1}-x_{2}\right\rangle \leq-\alpha\left\vert x_{1}-x_{2}\right\vert
^{2},P-a.s.;
\]

when $t=0$,%
\begin{align*}
&  \left\langle b\left(  0,\lambda_{1};u\right)  -b\left(  0,\lambda
_{2};u\right)  ,y_{1}-y_{2}\right\rangle +\left\langle \sigma\left(
0,\lambda_{1};u\right)  -\sigma\left(  0,\lambda_{2};u\right)  ,z_{1}%
-z_{2}\right\rangle \\
&  \leq-\alpha\left[  \left\vert y_{1}-y_{2}\right\vert ^{2}+\left\Vert
z_{1}-z_{2}\right\Vert ^{2}\right]  ,
\end{align*}
where $\alpha$ is a given positive constant.
\end{assumption}

Besides, in the following, we formally denote $b\left(  T,x,y,z,u\right)
\equiv0$, $\sigma\left(  T,x,y,z,u\right)  \equiv0$, $l\left(
T,x,y,z,u\right)  \equiv0$, $f\left(  0,x,y,z,u\right)  \equiv0$.

\section{Maximum principle for the partially coupled FBS$\Delta$E system}

For any $u\in\mathcal{U}$, it is obvious that there exists a unique solution
$\left\{  X_{t}\right\}  _{t=0}^{T}\in\mathcal{M}^{2}\left(  0,T;\mathbb{R}%
^{m}\right)  $ to the forward stochastic difference equation in the system
(\ref{c5_eq_w_pc_state_eq}). Then, by Theorem \ref{bsde_result_l}, the
backward equation in the system (\ref{c5_eq_w_pc_state_eq}) has a unique
solution $\left(  Y,Z,N\right)  $ where $Y=\left\{  Y_{t}\right\}  _{t=0}^{T}%
$, $Z=\left\{  Z_{t}\right\}  _{t=0}^{T-1}$ and $N=\left\{  N_{t}\right\}
_{t=0}^{T}$.

Suppose that $\bar{u}=\left\{  \bar{u}_{t}\right\}  _{t=0}^{T}$ is the optimal
control of problem (\ref{c5_eq_w_pc_state_eq})-(\ref{c5_eq_w_pc_cost_eq}) and
$\left(  \bar{X},\bar{Y},\bar{Z}\right)  $ is the corresponding optimal
trajectory. For a fixed time $0\leq s\leq T$, choose any $\Delta v\in
L^{2}\left(  \mathcal{F}_{s};\mathbb{R}^{r}\right)  $ such that $\bar{u}%
_{s}+\Delta v$ takes values in $U_{s}$. For any $\varepsilon\in\left[
0,1\right]  $, construct the perturbed admissible control
\begin{equation}
u_{t}^{\varepsilon}=\left(  1-\delta_{ts}\right)  \bar{u}_{t}+\delta
_{ts}\left(  \bar{u}_{s}+\varepsilon\Delta v\right)  =\bar{u}_{t}+\delta
_{ts}\varepsilon\Delta v, \label{perturb-control}%
\end{equation}
where $\delta_{ts}=1$ for $t=s$, $\delta_{ts}=0$ for $t\neq s$ and
$t\in\left\{  0,1,...,T\right\}  $. Since $U_{s}$ is a convex set, $\left\{
u_{t}^{\varepsilon}\right\}  _{t=0}^{T}\in\mathcal{U}$ is an admissible
control. Let $\left(  X^{\varepsilon},Y^{\varepsilon},Z^{\varepsilon
},N^{\varepsilon}\right)  $ be the solution of (\ref{c5_eq_w_pc_state_eq})
corresponding to the control $u^{\varepsilon}$.

Set%
\begin{equation}%
\begin{array}
[c]{rclrcl}%
\bar{\varphi}\left(  t\right)  & = & \varphi\left(  t,\bar{X}_{t},\bar{Y}%
_{t},\bar{Z}_{t},\bar{u}_{t}\right)  , & \varphi^{\varepsilon}\left(  t\right)
& = & \varphi\left(  t,X_{t}^{\varepsilon},Y_{t}^{\varepsilon},Z_{t}%
^{\varepsilon},u_{t}^{\varepsilon}\right)  ,\\
\widetilde{\varphi}^{\varepsilon}\left(  t\right)  & = & \varphi\left(
t,\bar{X}_{t},\bar{Y}_{t},\bar{Z}_{t},u_{t}^{\varepsilon}\right)  , &
\varphi_{\mu}\left(  t\right)  & = & \varphi_{\mu}\left(  t,\bar{X}_{t}%
,\bar{Y}_{t},\bar{Z}_{t},\bar{u}_{t}\right)  ,
\end{array}
\label{coefficient_notations}%
\end{equation}
where $\varphi=b$, $\sigma_{i}$, $g$, $f$, $l$, $h$ and $\mu=x$, $y$, $z_{i}$
and $u$.

Then, we have the following estimates.

\begin{lemma}
Under Assumption (\ref{control_assumption}), we have%
\begin{equation}
\sup_{0\leq t\leq T}\mathbb{E}\left\vert X_{t}^{\varepsilon}-\bar{X}%
_{t}\right\vert ^{2}\leq C\varepsilon^{2}\mathbb{E}\left\vert \Delta
v\right\vert ^{2}. \label{x_estimate_1_pc}%
\end{equation}

\end{lemma}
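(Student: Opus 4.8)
The plan is to derive a one-step recursion for $\mathbb{E}\vert X_{t+1}^{\varepsilon}-\bar{X}_{t+1}\vert^{2}$ and then close it by a discrete Gronwall argument. Write $\xi_{t}:=X_{t}^{\varepsilon}-\bar{X}_{t}$, so that $\xi_{0}=0$ and, subtracting the two forward equations in (\ref{c5_eq_w_pc_state_eq}),
\[
\xi_{t+1}=\xi_{t}+\Delta b_{t}+\sum_{i=1}^{d}\Delta\sigma_{i,t}\,\Delta W_{t}^{i},
\]
where $\Delta b_{t}:=b\left(t,X_{t}^{\varepsilon},u_{t}^{\varepsilon}\right)-b\left(t,\bar{X}_{t},\bar{u}_{t}\right)$ and $\Delta\sigma_{i,t}$ is defined analogously; both are $\mathcal{F}_{t}$-measurable.

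First I would exploit the martingale structure. Since $W$ has independent increments, $\Delta W_{t}$ is independent of $\mathcal{F}_{t}$, whence $\mathbb{E}\left[\Delta W_{t}|\mathcal{F}_{t}\right]=0$ and $\mathbb{E}\left[\Delta W_{t}\left(\Delta W_{t}\right)^{\ast}|\mathcal{F}_{t}\right]=I_{d}$. Taking the $\mathcal{F}_{t}$-conditional expectation of $\vert\xi_{t+1}\vert^{2}$, the cross term between $\xi_{t}+\Delta b_{t}$ and the stochastic sum vanishes, and the diffusion term contributes exactly $\sum_{i}\vert\Delta\sigma_{i,t}\vert^{2}$; hence
\[
\mathbb{E}\left[\left\vert \xi_{t+1}\right\vert ^{2}\,\big|\,\mathcal{F}_{t}\right]=\left\vert \xi_{t}+\Delta b_{t}\right\vert ^{2}+\sum_{i=1}^{d}\left\vert \Delta\sigma_{i,t}\right\vert ^{2}.
\]

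Next I would estimate the increments. By Assumption (\ref{control_assumption}), $b$ and each $\sigma_{i}$ have uniformly bounded derivatives in $(x,u)$, hence are Lipschitz; together with $u_{t}^{\varepsilon}-\bar{u}_{t}=\delta_{ts}\varepsilon\Delta v$ this yields $\vert\Delta b_{t}\vert\leq L\vert\xi_{t}\vert+L\delta_{ts}\varepsilon\vert\Delta v\vert$ and an analogous bound for $\Delta\sigma_{i,t}$. Substituting these, using $\vert\xi_{t}+\Delta b_{t}\vert^{2}\leq(1+\gamma)\vert\xi_{t}\vert^{2}+(1+\gamma^{-1})\vert\Delta b_{t}\vert^{2}$, and taking full expectation gives a recursion of the form
\[
\mathbb{E}\left\vert \xi_{t+1}\right\vert ^{2}\leq(1+C)\,\mathbb{E}\left\vert \xi_{t}\right\vert ^{2}+C\,\delta_{ts}\,\varepsilon^{2}\,\mathbb{E}\left\vert \Delta v\right\vert ^{2}.
\]

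Finally I would iterate. Because the control is perturbed only at the single instant $t=s$, a trivial induction shows $\xi_{t}\equiv0$ for $t\leq s$, while the step $t=s$ injects $\mathbb{E}\vert\xi_{s+1}\vert^{2}\leq C\varepsilon^{2}\mathbb{E}\vert\Delta v\vert^{2}$. For $t>s$ the recursion is homogeneous, $\mathbb{E}\vert\xi_{t+1}\vert^{2}\leq(1+C)\mathbb{E}\vert\xi_{t}\vert^{2}$, so iterating over the finitely many remaining steps gives $\mathbb{E}\vert\xi_{t}\vert^{2}\leq(1+C)^{T}\mathbb{E}\vert\xi_{s+1}\vert^{2}$. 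Taking the supremum over $t$ and absorbing $(1+C)^{T}$ into the constant yields (\ref{x_estimate_1_pc}). I do not expect a serious obstacle: the only point requiring care is the conditioning step, where one must use the independence of the increments (rather than merely the unconditional identity $\mathbb{E}[\Delta W_{t}(\Delta W_{t})^{\ast}]=I_{d}$) to kill the cross term and to evaluate the conditional second moment; everything else is a routine discrete Gronwall estimate over a finite horizon.
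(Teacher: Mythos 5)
Your proof is correct and follows essentially the same route as the paper's: the state difference vanishes identically for $t\leq s$, the perturbation at $t=s$ injects a term of order $\varepsilon^{2}\mathbb{E}\left\vert \Delta v\right\vert ^{2}$ via the boundedness of $b_{u},\sigma_{iu}$ and the conditional second moment of $\Delta W_{s}$, and the remaining steps are closed by the Lipschitz bounds from bounded $b_{x},\sigma_{ix}$ and finite-horizon induction. Your packaging of the three cases into a single recursion with the $\delta_{ts}$ source term (and the exact conditional identity killing the cross terms, rather than the paper's $(d+1)$-term Cauchy--Schwarz) is only a cosmetic streamlining, not a different argument.
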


\begin{proof}
In the following, the positive constant $C$ may change from lines to lines.

When $t=0,...,s$, $X_{t}^{\varepsilon}=\bar{X}_{t}$.

When $t=s+1$,
\[
X_{s+1}^{\varepsilon}-\bar{X}_{s+1}=\widetilde{b}^{\varepsilon}\left(
s\right)  -\overline{b}\left(  s\right)  +\sum_{i=1}^{d}\left[
\widetilde{\sigma_{i}}^{\varepsilon}\left(  s\right)  -\overline{\sigma}%
_{i}\left(  s\right)  \right]  \Delta W_{s}^{i}.
\]
Then,%
\[
\mathbb{E}\left\vert X_{s+1}^{\varepsilon}-\bar{X}_{s+1}\right\vert ^{2}%
\leq\left(  d+1\right)  \mathbb{E}\left[  \left\vert \widetilde{b}%
^{\varepsilon}\left(  s\right)  -\overline{b}\left(  s\right)  \right\vert
^{2}+\sum_{i=1}^{d}\left\vert \left[  \widetilde{\sigma_{i}}^{\varepsilon
}\left(  s\right)  -\overline{\sigma}_{i}\left(  s\right)  \right]  \Delta
W_{s}^{i}\right\vert ^{2}\right]  .
\]
By the boundedness of $b_{u}$, we have
\[
\mathbb{E}\left[  \left\vert \widetilde{b}^{\varepsilon}\left(  s\right)
-\overline{b}\left(  s\right)  \right\vert ^{2}\right]  \leq C\mathbb{E}%
\left[  \left\vert u_{s}^{\varepsilon}-\bar{u}_{s}\right\vert ^{2}\right]
=C\varepsilon^{2}\mathbb{E}\left[  \left\vert \Delta v\right\vert ^{2}\right]
.
\]
By the boundedness of $\sigma_{iu}$, we have%
\begin{align*}
&  \mathbb{E}\left\vert \left[  \widetilde{\sigma_{i}}^{\varepsilon}\left(
s\right)  -\overline{\sigma}_{i}\left(  s\right)  \right]  \Delta W_{s}%
^{i}\right\vert ^{2}\\
&  =\mathbb{E}\left[  \left\vert \widetilde{\sigma_{i}}^{\varepsilon}\left(
s\right)  -\overline{\sigma}_{i}\left(  s\right)  \right\vert ^{2}%
|\mathbb{E}\left[  \left\vert \Delta W_{s}^{i}\right\vert ^{2}|\mathcal{F}%
_{s}\right]  \right]  \\
&  =\mathbb{E}\left[  \left\vert \left[  \widetilde{\sigma_{i}}^{\varepsilon
}\left(  s\right)  -\overline{\sigma}_{i}\left(  s\right)  \right]
\right\vert ^{2}\right]  \\
&  \leq C\varepsilon^{2}\mathbb{E}\left[  \left\vert \Delta v\right\vert
^{2}\right]
\end{align*}
which leads to%
\[
\mathbb{E}\left\vert X_{s+1}^{\varepsilon}-\bar{X}_{s+1}\right\vert ^{2}\leq
C\varepsilon^{2}\mathbb{E}\left[  \left\vert \Delta v\right\vert ^{2}\right]
.
\]

When $t=s+2,...,T$,%
\[%
\begin{array}
[c]{rl}%
\mathbb{E}\left\vert X_{t}^{\varepsilon}-\bar{X}_{t}\right\vert ^{2} &
\leq\left(  d+1\right)  \mathbb{E}\left\vert b\left(  t-1,X_{t-1}%
^{\varepsilon},\bar{u}_{t-1}\right)  -b\left(  t-1,\bar{X}_{t-1},\bar{u}%
_{t-1}\right)  \right\vert ^{2}\\
& +\sum_{i=1}^{d}\left\vert \left[  \sigma_{i}\left(  t-1,X_{t-1}%
^{\varepsilon},\bar{u}_{t-1}\right)  -\sigma_{i}\left(  t-1,\bar{X}_{t-1}%
,\bar{u}_{t-1}\right)  \right]  \Delta W_{s}^{i}\right\vert ^{2}.
\end{array}
\]
Due to the boundedness of $b_{x}$, $\sigma_{ix}$, we obtain $\mathbb{E}%
\left\vert X_{t}^{\varepsilon}-\bar{X}_{t}\right\vert ^{2}\leq C\mathbb{E}%
\left[  \left\vert X_{t-1}^{\varepsilon}-\bar{X}_{t-1}\right\vert ^{2}\right]
$. Thus, by induction we prove the result.
\end{proof}

Let $\xi=\left\{  \xi_{t}\right\}  _{t=0}^{T}$ be the solution to the
following difference equation,%
\begin{equation}
\left\{
\begin{array}
[c]{rcl}%
\Delta\xi_{t} & = & b_{x}\left(  t\right)  \xi_{t}+\delta_{ts}b_{u}\left(
t\right)  \varepsilon\Delta v+\sum_{i=1}^{d}\left[  \sigma_{ix}\left(
t\right)  \xi_{t}+\delta_{ts}\varepsilon\sigma_{iu}\left(  t\right)  \Delta
v\right]  \Delta W_{t}^{i},\\
\xi_{0} & = & 0,
\end{array}
\right.  \label{variational eq-xi}%
\end{equation}

It is easy to check that%
\begin{equation}
\sup_{0\leq t\leq T}\mathbb{E}\left\vert \xi_{t}\right\vert ^{2}\leq
C\varepsilon^{2}\mathbb{E}\left\vert \Delta v\right\vert ^{2},
\label{xi_estimate}%
\end{equation}

and we have the following result:

\begin{lemma}
\label{x_approxi}Under Assumption \ref{control_assumption}, we have%
\[
\sup_{0\leq t\leq T}\mathbb{E}\left\vert X_{t}^{\varepsilon}-\bar{X}_{t}%
-\xi_{t}\right\vert ^{2}=o\left(  \varepsilon^{2}\right)  .
\]

\end{lemma}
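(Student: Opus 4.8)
The plan is to set $\Theta_{t}:=X_{t}^{\varepsilon}-\bar{X}_{t}-\xi_{t}$ and to show $\sup_{t}\mathbb{E}\left\vert \Theta_{t}\right\vert ^{2}=o(\varepsilon^{2})$ by deriving a \emph{linear} difference recursion for $\Theta_{t}$ whose inhomogeneous term is $o(\varepsilon^{2})$ in $L^{2}$, and then closing the estimate by the (finite-horizon) discrete Gronwall argument already used in the preceding lemma. First I would Taylor-expand the coefficients. Since in the partially coupled system $b,\sigma_{i}$ depend only on $(t,x,u)$, the fundamental theorem of calculus along the segment joining $(\bar{X}_{t},\bar{u}_{t})$ to $(X_{t}^{\varepsilon},u_{t}^{\varepsilon})$ gives
\[
b\left(  t,X_{t}^{\varepsilon},u_{t}^{\varepsilon}\right)  -b\left(  t,\bar{X}_{t},\bar{u}_{t}\right)  =\widetilde{b}_{x}\left(  t\right)  \left(  X_{t}^{\varepsilon}-\bar{X}_{t}\right)  +\widetilde{b}_{u}\left(  t\right)  \left(  u_{t}^{\varepsilon}-\bar{u}_{t}\right)  ,
\]
where $\widetilde{b}_{x}(t)=\int_{0}^{1}b_{x}(t,\bar{X}_{t}+\theta(X_{t}^{\varepsilon}-\bar{X}_{t}),\bar{u}_{t}+\theta(u_{t}^{\varepsilon}-\bar{u}_{t}))\,d\theta$ and $\widetilde{b}_{u}(t)$ is defined analogously; similarly for each $\sigma_{i}$. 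Substituting $X_{t}^{\varepsilon}-\bar{X}_{t}=\Theta_{t}+\xi_{t}$ and $u_{t}^{\varepsilon}-\bar{u}_{t}=\delta_{ts}\varepsilon\Delta v$, and subtracting the variational equation (\ref{variational eq-xi}), I obtain
\[
\Delta\Theta_{t}=\widetilde{b}_{x}\left(  t\right)  \Theta_{t}+r_{t}^{b}+\sum_{i=1}^{d}\left[  \widetilde{\sigma}_{ix}\left(  t\right)  \Theta_{t}+r_{t}^{\sigma_{i}}\right]  \Delta W_{t}^{i},\qquad\Theta_{0}=0,
\]
with the remainder $r_{t}^{b}=(\widetilde{b}_{x}(t)-b_{x}(t))\xi_{t}+\delta_{ts}\varepsilon(\widetilde{b}_{u}(t)-b_{u}(t))\Delta v$ and $r_{t}^{\sigma_{i}}$ defined in the same way.

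The key estimate, and the main obstacle, is to prove $\mathbb{E}\left\vert r_{t}^{b}\right\vert ^{2}=o(\varepsilon^{2})$ and $\mathbb{E}\left\vert r_{t}^{\sigma_{i}}\right\vert ^{2}=o(\varepsilon^{2})$, i.e. to upgrade the naive bound $O(\varepsilon^{2})$ to $o(\varepsilon^{2})$. This rests on two ingredients. First, by the linearity of (\ref{variational eq-xi}) one has $\xi_{t}=\varepsilon\zeta_{t}$, where $\zeta_{t}$ solves the same difference equation with $\Delta v$ in place of $\varepsilon\Delta v$; hence $\zeta_{t}$ is $\varepsilon$-independent and lies in $\mathcal{M}^{2}$, so that $\mathbb{E}\left\vert (\widetilde{b}_{x}(t)-b_{x}(t))\xi_{t}\right\vert ^{2}=\varepsilon^{2}\,\mathbb{E}\left\vert (\widetilde{b}_{x}(t)-b_{x}(t))\zeta_{t}\right\vert ^{2}$. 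Second, as $\varepsilon\to0$ we have $X_{t}^{\varepsilon}\to\bar{X}_{t}$ in $L^{2}$ by (\ref{x_estimate_1_pc}) and $u_{t}^{\varepsilon}\to\bar{u}_{t}$; by the continuity of $b_{x}$ the integrand converges, so $\widetilde{b}_{x}(t)\to b_{x}(t)$ in probability, while the uniform boundedness of $b_{x}$ in Assumption \ref{control_assumption} gives $\left\vert \widetilde{b}_{x}(t)-b_{x}(t)\right\vert \leq C$. Dominated convergence (with dominating function $C^{2}\left\vert \zeta_{t}\right\vert ^{2}$) then yields $\mathbb{E}\left\vert (\widetilde{b}_{x}(t)-b_{x}(t))\zeta_{t}\right\vert ^{2}\to0$, so the whole term is $o(\varepsilon^{2})$. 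The $u$-contribution is handled identically (at $t=s$ one has $X_{s}^{\varepsilon}=\bar{X}_{s}$ and $u_{s}^{\varepsilon}\to\bar{u}_{s}$), and the $\sigma_{i}$-terms are treated verbatim. This step is the crux, because it is precisely where the gain from $O$ to $o$ must be extracted; everything else is bookkeeping.

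Finally I would close the recursion. Writing $\Theta_{t+1}=P_{t}+\sum_{i}Q_{t}^{i}\Delta W_{t}^{i}$ with the $\mathcal{F}_{t}$-measurable terms $P_{t}=(I+\widetilde{b}_{x}(t))\Theta_{t}+r_{t}^{b}$ and $Q_{t}^{i}=\widetilde{\sigma}_{ix}(t)\Theta_{t}+r_{t}^{\sigma_{i}}$, I condition on $\mathcal{F}_{t}$ and use $\mathbb{E}[\Delta W_{t}|\mathcal{F}_{t}]=0$ together with $\mathbb{E}[\Delta W_{t}(\Delta W_{t})^{\ast}|\mathcal{F}_{t}]=I_{d}$; all cross terms drop out and
\[
\mathbb{E}\left\vert \Theta_{t+1}\right\vert ^{2}=\mathbb{E}\left\vert P_{t}\right\vert ^{2}+\sum_{i=1}^{d}\mathbb{E}\left\vert Q_{t}^{i}\right\vert ^{2}\leq C\,\mathbb{E}\left\vert \Theta_{t}\right\vert ^{2}+C\Big(\mathbb{E}\left\vert r_{t}^{b}\right\vert ^{2}+\sum_{i=1}^{d}\mathbb{E}\left\vert r_{t}^{\sigma_{i}}\right\vert ^{2}\Big),
\]
where the constant $C$ comes from the boundedness of $b_{x}$ and $\sigma_{ix}$. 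Since $\Theta_{t}=0$ for $t\leq s$ (as $X_{t}^{\varepsilon}=\bar{X}_{t}$ and $\xi_{t}=0$ there) and the inhomogeneous term is $o(\varepsilon^{2})$ at each step, iterating this recursion over the finitely many times $t\in\{0,1,\dots,T\}$ and noting that a finite sum of $o(\varepsilon^{2})$ quantities is again $o(\varepsilon^{2})$ gives $\sup_{0\leq t\leq T}\mathbb{E}\left\vert \Theta_{t}\right\vert ^{2}=o(\varepsilon^{2})$, which is the claim.
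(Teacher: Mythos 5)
Your proposal is correct and follows essentially the same route as the paper's proof: Taylor-expand $b$ and $\sigma_{i}$ in integral form along the segment, subtract the variational equation (\ref{variational eq-xi}) to obtain a linear recursion for $\Theta_{t}=X_{t}^{\varepsilon}-\bar{X}_{t}-\xi_{t}$ with remainder terms, show the remainders are $o\left(\varepsilon^{2}\right)$ from the convergence $\widetilde{b}_{x}\rightarrow b_{x}$, $\widetilde{\sigma}_{ix}\rightarrow\sigma_{ix}$, $\widetilde{b}_{u}\rightarrow b_{u}$ together with the bound on $\xi_{t}$, and close by iterating over the finite horizon starting from $\Theta_{t}=0$ for $t\leq s$. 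Your scaling observation $\xi_{t}=\varepsilon\zeta_{t}$ combined with dominated convergence is merely a more explicit justification of the $o\left(\varepsilon^{2}\right)$ step that the paper handles tersely via the estimate (\ref{xi_estimate}) and boundedness of the averaged derivatives; the underlying argument is the same.
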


\begin{proof}
When $t=0,...,s$, $X_{t}^{\varepsilon}=\bar{X}_{t}$ and $\xi_{t}=0$ which lead
to $X_{t}^{\varepsilon}-\bar{X}_{t}-\xi_{t}=0.$

When $t=s+1$,%
\[
X_{s+1}^{\varepsilon}-\bar{X}_{s+1}-\xi_{s+1}=\left[  \widetilde{b}_{u}\left(
s\right)  -b_{u}\left(  s\right)  \right]  \varepsilon\Delta v+\sum_{i=1}%
^{d}\left[  \widetilde{\sigma}_{iu}\left(  s\right)  -\sigma_{iu}\left(
s\right)  \right]  \varepsilon\Delta v\Delta W_{s}^{i}%
\]
where
\begin{align*}
\widetilde{b}_{u}\left(  s\right)   &  =\int_{0}^{1}b_{u}\left(  s,\bar{X}%
_{s},\bar{u}_{s}+\lambda\left(  u_{s}^{\varepsilon}-\bar{u}_{s}\right)
\right)  d\lambda,\\
\widetilde{\sigma}_{iu}\left(  s\right)   &  =\int_{0}^{1}\sigma_{iu}\left(
s,\bar{X}_{s},\bar{u}_{s}+\lambda\left(  u_{s}^{\varepsilon}-\bar{u}%
_{s}\right)  \right)  d\lambda.
\end{align*}
Then
\[%
\begin{array}
[c]{rl}%
\mathbb{E}\left\vert X_{s+1}^{\varepsilon}-\bar{X}_{s+1}-\xi_{s+1}\right\vert
^{2} & \leq\left(  d+1\right)  \mathbb{E}\left[  \left\vert \left[
\widetilde{b}_{u}\left(  s\right)  -b_{u}\left(  s\right)  \right]
\varepsilon\Delta v\right\vert ^{2}+\sum_{i=1}^{d}\left\vert \left[
\widetilde{\sigma}_{iu}\left(  s\right)  -\sigma_{iu}\left(  s\right)
\right]  \varepsilon\Delta v\Delta W_{s}^{i}\right\vert ^{2}\right] \\
& \leq C\mathbb{E}\left[  \left\Vert \widetilde{b}_{u}\left(  s\right)
-b_{u}\left(  s\right)  \right\Vert ^{2}\left\vert \Delta v\right\vert
^{2}+\sum_{i=1}^{d}\left\Vert \widetilde{\sigma}_{iu}\left(  s\right)
-\sigma_{iu}\left(  s\right)  \right\Vert ^{2}\left\vert \Delta v\right\vert
^{2}\right]  \varepsilon^{2}.
\end{array}
\]
Since $\left\Vert \widetilde{b}_{u}\left(  s\right)  -b_{u}\left(  s\right)
\right\Vert \rightarrow0$ and $\left\Vert \widetilde{\sigma}_{iu}\left(
s\right)  -\sigma_{iu}\left(  s\right)  \right\Vert \rightarrow0$ as
$\varepsilon\rightarrow0$, we have%
\[
\lim_{\varepsilon\rightarrow0}\frac{1}{\varepsilon^{2}}\mathbb{E}\left\vert
X_{s+1}^{\varepsilon}-\bar{X}_{s+1}-\xi_{s+1}\right\vert ^{2}=0.
\]

When $t=s+2,...,T$,%
\[%
\begin{array}
[c]{rl}%
X_{t}^{\varepsilon}-\bar{X}_{t}-\xi_{t} & =\widetilde{b}_{x}\left(
t-1\right)  \left(  X_{t-1}^{\varepsilon}-\bar{X}_{t-1}-\xi_{t-1}\right)
+\left[  \widetilde{b}_{x}\left(  t-1\right)  -b_{x}\left(  t-1\right)
\right]  \xi_{t-1}\\
& +\sum_{i=1}^{d}\left\{  \widetilde{\sigma}_{ix}\left(  t-1\right)  \left(
X_{t-1}^{\varepsilon}-\bar{X}_{t-1}-\xi_{t-1}\right)  +\left[
\widetilde{\sigma}_{ix}\left(  t-1\right)  -\sigma_{ix}\left(  t-1\right)
\right]  \xi_{t-1}\right\}  \Delta W_{t-1}^{i}%
\end{array}
\]
where
\begin{align*}
\widetilde{b}_{x}\left(  t\right)   &  =\int_{0}^{1}b_{x}\left(  t,\bar{X}%
_{t}+\lambda\left(  X_{t}^{\varepsilon}-\bar{X}_{t}\right)  ,\bar{u}%
_{t}\right)  d\lambda,\\
\widetilde{\sigma}_{ix}\left(  t\right)   &  =\int_{0}^{1}\sigma_{ix}\left(
t,\bar{X}_{t}+\lambda\left(  X_{t}^{\varepsilon}-\bar{X}_{t}\right)  ,\bar
{u}_{t}\right)  d\lambda.
\end{align*}
Then%
\[%
\begin{array}
[c]{rl}%
\mathbb{E}\left\vert X_{t}^{\varepsilon}-\bar{X}_{t}-\xi_{t}\right\vert
^{2}\leq & C\mathbb{E}\left\Vert \widetilde{b}_{x}\left(  t-1\right)
\right\Vert ^{2}\left\vert X_{t-1}^{\varepsilon}-\bar{X}_{t-1}-\xi
_{t-1}\right\vert ^{2}+\left\Vert \widetilde{b}_{x}\left(  t-1\right)
-b_{x}\left(  t-1\right)  \right\Vert ^{2}\left\vert \xi_{t-1}\right\vert
^{2}\\
& +\sum_{i=1}^{d}\left\Vert \widetilde{\sigma}_{ix}\left(  t-1\right)
\right\Vert ^{2}\left\vert X_{t-1}^{\varepsilon}-\bar{X}_{t-1}-\xi
_{t-1}\right\vert ^{2}+\sum_{i=1}^{d}\left\Vert \widetilde{\sigma}_{ix}\left(
t-1\right)  -\sigma_{ix}\left(  t-1\right)  \right\Vert ^{2}\left\vert
\xi_{t-1}\right\vert ^{2}\\
\leq & C\mathbb{E}\left(  \left\Vert \widetilde{b}_{x}\left(  t-1\right)
\right\Vert ^{2}+\sum_{i=1}^{d}\left\Vert \widetilde{\sigma}_{ix}\left(
t-1\right)  \right\Vert ^{2}\right)  \left\vert X_{t-1}^{\varepsilon}-\bar
{X}_{t-1}-\xi_{t-1}\right\vert ^{2}\\
& +\left\Vert \widetilde{b}_{x}\left(  t-1\right)  -b_{x}\left(  t-1\right)
\right\Vert ^{2}\left\vert \xi_{t-1}\right\vert ^{2}\left.  +\sum_{i=1}%
^{d}\left\Vert \widetilde{\sigma}_{ix}\left(  t-1\right)  -\sigma_{ix}\left(
t-1\right)  \right\Vert ^{2}\left\vert \xi_{t-1}\right\vert ^{2}\right]  .
\end{array}
\]
$\left\Vert \widetilde{b}_{x}\left(  t-1\right)  -b_{x}\left(  t-1\right)
\right\Vert \rightarrow0$ and $\left\Vert \widetilde{\sigma}_{ix}\left(
t-1\right)  -\sigma_{ix}\left(  t-1\right)  \right\Vert \rightarrow0$ as
$\varepsilon\rightarrow0$. Since $\widetilde{b}_{x}\left(  t-1\right)  $ and
$\widetilde{\sigma}_{ix}\left(  t-1\right)  $ are bounded, by the estimation
(\ref{xi_estimate}), we have%
\[
\lim_{\varepsilon\rightarrow0}\frac{1}{\varepsilon^{2}}\mathbb{E}\left\vert
X_{t}^{\varepsilon}-\bar{X}_{t}-\xi_{t}\right\vert ^{2}=0.
\]
This completes the proof.
\end{proof}

\begin{lemma}
Under Assumption \ref{control_assumption}, we have
\begin{align}
\sup_{0\leq t\leq T}\mathbb{E}\left\vert Y_{t}^{\varepsilon}-\bar{Y}%
_{t}\right\vert ^{2}  &  \leq C\varepsilon^{2}\mathbb{E}\left\vert \Delta
v\right\vert ^{2}\label{y_estimate_1}\\
\sup_{0\leq t\leq T-1}\mathbb{E}\left\Vert Z_{t}^{\varepsilon}-\bar{Z}%
_{t}\right\Vert ^{2}  &  \leq C\varepsilon^{2}\mathbb{E}\left\vert \Delta
v\right\vert ^{2}. \label{z_estimate_1}%
\end{align}

\end{lemma}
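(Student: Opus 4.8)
The plan is to exploit the explicit one-step backward recursion for $(Y,Z)$ furnished by the proof of Theorem \ref{bsde_result_l}, and then to run a backward induction on $t$ that establishes the two estimates \emph{simultaneously}. First I would record that, for the controlled backward equation in (\ref{c5_eq_w_pc_state_eq}), the proof of Theorem \ref{bsde_result_l} gives, for every $t\in\{0,\dots,T-1\}$,
\begin{align*}
Y_{t} &= \mathbb{E}\left[Y_{t+1}+f\left(t+1,X_{t+1},Y_{t+1},Z_{t+1},u_{t+1}\right)|\mathcal{F}_{t}\right],\\
Z_{t} &= \mathbb{E}\left[\left(Y_{t+1}+f\left(t+1,X_{t+1},Y_{t+1},Z_{t+1},u_{t+1}\right)\right)\left(\Delta W_{t}\right)^{\ast}|\mathcal{F}_{t}\right],
\end{align*}
valid both for $(X^{\varepsilon},Y^{\varepsilon},Z^{\varepsilon},u^{\varepsilon})$ and for $(\bar{X},\bar{Y},\bar{Z},\bar{u})$, with the common fixed terminal value $Y_{T}^{\varepsilon}=\bar{Y}_{T}=y_{T}$. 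Writing $\delta Y_{t}=Y_{t}^{\varepsilon}-\bar{Y}_{t}$, $\delta Z_{t}=Z_{t}^{\varepsilon}-\bar{Z}_{t}$ and $\delta f(t+1)=f(t+1,X_{t+1}^{\varepsilon},Y_{t+1}^{\varepsilon},Z_{t+1}^{\varepsilon},u_{t+1}^{\varepsilon})-f(t+1,\bar{X}_{t+1},\bar{Y}_{t+1},\bar{Z}_{t+1},\bar{u}_{t+1})$, subtracting the two representations yields
\begin{align*}
\delta Y_{t} &= \mathbb{E}\left[\delta Y_{t+1}+\delta f(t+1)|\mathcal{F}_{t}\right],\\
\delta Z_{t} &= \mathbb{E}\left[\left(\delta Y_{t+1}+\delta f(t+1)\right)\left(\Delta W_{t}\right)^{\ast}|\mathcal{F}_{t}\right].
\end{align*}

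Next I would estimate $\delta f(t+1)$. By the uniform boundedness of $f_{x},f_{y},f_{z_{i}},f_{u}$ granted by Assumption \ref{control_assumption}, a mean-value expansion gives, $P$-a.s.,
\[
\left|\delta f(t+1)\right|\leq C\left(\left|X_{t+1}^{\varepsilon}-\bar{X}_{t+1}\right|+\left|\delta Y_{t+1}\right|+\left\Vert \delta Z_{t+1}\right\Vert +\left|u_{t+1}^{\varepsilon}-\bar{u}_{t+1}\right|\right),
\]
where $u_{t+1}^{\varepsilon}-\bar{u}_{t+1}=\delta_{(t+1)s}\varepsilon\Delta v$. Applying Jensen's inequality to the representation of $\delta Y_{t}$, and the conditional Cauchy--Schwarz inequality together with $\mathbb{E}[\Delta W_{t}(\Delta W_{t})^{\ast}|\mathcal{F}_{t}]=I_{d}$ to the representation of $\delta Z_{t}$ (treated row by row, exactly as in the $Z_{T-1}$ estimate inside the proof of Theorem \ref{bsde_result_l}), I obtain that both $\mathbb{E}\left|\delta Y_{t}\right|^{2}$ and $\mathbb{E}\left\Vert \delta Z_{t}\right\Vert ^{2}$ are dominated by
\[
C\,\mathbb{E}\left(\left|\delta Y_{t+1}\right|^{2}+\left\Vert \delta Z_{t+1}\right\Vert ^{2}+\left|X_{t+1}^{\varepsilon}-\bar{X}_{t+1}\right|^{2}+\left|u_{t+1}^{\varepsilon}-\bar{u}_{t+1}\right|^{2}\right).
\]

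Finally I would close the estimates by backward induction on $t$, from $t=T$ downwards. The forward estimate (\ref{x_estimate_1_pc}) supplies $\mathbb{E}|X_{t+1}^{\varepsilon}-\bar{X}_{t+1}|^{2}\leq C\varepsilon^{2}\mathbb{E}|\Delta v|^{2}$, while $\mathbb{E}|u_{t+1}^{\varepsilon}-\bar{u}_{t+1}|^{2}\leq\varepsilon^{2}\mathbb{E}|\Delta v|^{2}$ by the construction (\ref{perturb-control}). At the terminal step $\delta Y_{T}=0$, so feeding this together with the $X$ and $u$ bounds into the one-step inequality yields the claim at $t=T-1$; descending one step at a time then propagates the bound $\mathbb{E}|\delta Y_{t}|^{2}+\mathbb{E}\|\delta Z_{t}\|^{2}\leq C\varepsilon^{2}\mathbb{E}|\Delta v|^{2}$ to every $t$. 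Because the horizon $T$ is finite the accumulated constant stays finite, so no Gronwall-type argument is needed, and taking the supremum over $t$ gives (\ref{y_estimate_1}) and (\ref{z_estimate_1}). The one delicate point — and the main (if mild) obstacle — is that $\delta Z_{t+1}$ appears in the one-step bound for $\delta Y_{t}$, so the $Y$ and $Z$ estimates cannot be derived separately but must be carried jointly through the induction; the finiteness of the horizon is precisely what makes this coupling harmless.
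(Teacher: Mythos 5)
Your proposal is correct and follows essentially the same route as the paper: both rest on the conditional-expectation recursion for $(Y_t,Z_t)$ from Theorem \ref{bsde_result_l}, the Lipschitz bounds coming from the bounded derivatives of $f$, the forward estimate (\ref{x_estimate_1_pc}), and a backward induction that carries the $Y$ and $Z$ estimates jointly. The only difference is cosmetic — you absorb the perturbation term uniformly via $\delta_{(t+1)s}\varepsilon\Delta v$, whereas the paper splits the induction into the cases $t\geq s$, $t=s-1$, and $t\leq s-2$.
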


\begin{proof}
It is obvious that $Y_{T}^{\varepsilon}-\bar{Y}_{T}=0$ at time $T$.

When $t=s,...,T-1$ (if $s=T$, skip this part), we have%
\begin{align*}
&  \mathbb{E}\left\vert f\left(  t+1,X_{t+1}^{\varepsilon},Y_{t+1}%
^{\varepsilon},Z_{t+1}^{\varepsilon},\bar{u}_{t+1}\right)  -f\left(
t+1,\bar{X}_{t+1},\bar{Y}_{t+1},\bar{Z}_{t+1},\bar{u}_{t+1}\right)
\right\vert ^{2}\\
&  \leq C\mathbb{E}\left[  \left\vert X_{t+1}^{\varepsilon}-\bar{X}%
_{t+1}\right\vert ^{2}+\left\vert Y_{t+1}^{\varepsilon}-\bar{Y}_{t+1}%
\right\vert ^{2}+\left\Vert Z_{t+1}^{\varepsilon}-\bar{Z}_{t+1}\right\Vert
^{2}\right] \\
&  \leq C\mathbb{E}\left[  \left\vert Y_{t+1}^{\varepsilon}-\bar{Y}%
_{t+1}\right\vert ^{2}+\left\Vert Z_{t+1}^{\varepsilon}-\bar{Z}_{t+1}%
\right\Vert ^{2}\right]  +C_{1}\varepsilon^{2}\mathbb{E}\left\vert \Delta
v\right\vert ^{2}.
\end{align*}
It yields that%
\[
\mathbb{E}\left\vert Y_{t}^{\varepsilon}-\bar{Y}_{t}\right\vert ^{2}\leq
C\mathbb{E}\left[  \left\vert Y_{t+1}^{\varepsilon}-\bar{Y}_{t+1}\right\vert
^{2}+\left\Vert Z_{t+1}^{\varepsilon}-\bar{Z}_{t+1}\right\Vert ^{2}\right]
+C\varepsilon^{2}\mathbb{E}\left\vert \Delta v\right\vert ^{2}.
\]
Similarly, we have%
\begin{align*}
&  \mathbb{E}\left\Vert Z_{t}^{\varepsilon}-\bar{Z}_{t}\right\Vert ^{2}\\
&  \leq C\mathbb{E}\left[  \left\vert Y_{t+1}^{\varepsilon}-\bar{Y}%
_{t+1}\right\vert ^{2}+\left\Vert Z_{t+1}^{\varepsilon}-\bar{Z}_{t+1}%
\right\Vert ^{2}\right]  +C\varepsilon^{2}\mathbb{E}\left\vert \Delta
v\right\vert ^{2}.
\end{align*}

When $t=s-1$, by similar analysis,%
\[
\left\{
\begin{array}
[c]{rcc}%
\mathbb{E}\left\vert Y_{t}^{\varepsilon}-\bar{Y}_{t}\right\vert ^{2} & \leq &
C\mathbb{E}\left[  \left\vert Y_{t+1}^{\varepsilon}-\bar{Y}_{t+1}\right\vert
^{2}+\left\Vert Z_{t+1}^{\varepsilon}-\bar{Z}_{t+1}\right\Vert ^{2}\right]
+C\varepsilon^{2}\mathbb{E}\left\vert \Delta v\right\vert ^{2},\\
\mathbb{E}\left[  \left\Vert Z_{t}^{\varepsilon}-\bar{Z}_{t}\right\Vert
^{2}\right]  & \leq & C\mathbb{E}\left[  \left\vert Y_{t+1}^{\varepsilon}%
-\bar{Y}_{t+1}\right\vert ^{2}+\left\Vert Z_{t+1}^{\varepsilon}-\bar{Z}%
_{t+1}\right\Vert ^{2}\right]  +C\varepsilon^{2}\mathbb{E}\left\vert \Delta
v\right\vert ^{2}.
\end{array}
\right.
\]
If $s=T$, it shows like%
\[
\left\{
\begin{array}
[c]{rcc}%
\mathbb{E}\left\vert Y_{T-1}^{\varepsilon}-\bar{Y}_{T-1}\right\vert ^{2} &
\leq & C\varepsilon^{2}\mathbb{E}\left\vert \Delta v\right\vert ^{2},\\
\mathbb{E}\left[  \left\Vert Z_{T-1}^{\varepsilon}-\bar{Z}_{T-1}\right\Vert
^{2}\right]  & \leq & C\varepsilon^{2}\mathbb{E}\left\vert \Delta v\right\vert
^{2}.
\end{array}
\right.
\]

When $t=0,...,s-2$, we have
\[
\left\{
\begin{array}
[c]{rcc}%
\mathbb{E}\left\vert Y_{t}^{\varepsilon}-\bar{Y}_{t}\right\vert ^{2} & \leq &
C\mathbb{E}\left[  \left\vert Y_{t+1}^{\varepsilon}-\bar{Y}_{t+1}\right\vert
^{2}+\left\Vert Z_{t+1}^{\varepsilon}-\bar{Z}_{t+1}\right\Vert ^{2}\right]
,\\
\mathbb{E}\left[  \left\Vert Z_{t}^{\varepsilon}-\bar{Z}_{t}\right\Vert
^{2}\right]  & \leq & C\mathbb{E}\left[  \left\vert Y_{t+1}^{\varepsilon}%
-\bar{Y}_{t+1}\right\vert ^{2}+\left\Vert Z_{t+1}^{\varepsilon}-\bar{Z}%
_{t+1}\right\Vert ^{2}\right]  .
\end{array}
\right.
\]

Thus, there exists $C>0$, such that for any $t\in\left\{  0,1,...,T\right\}
$,
\[
\left\{
\begin{array}
[c]{rcc}%
\mathbb{E}\left\vert Y_{t}^{\varepsilon}-\bar{Y}_{t}\right\vert ^{2} & \leq &
C\varepsilon^{2}\mathbb{E}\left\vert \Delta v\right\vert ^{2},\\
\mathbb{E}\left[  \left\Vert Z_{t}^{\varepsilon}-\bar{Z}_{t}\right\Vert
^{2}\right]  & \leq & C\varepsilon^{2}\mathbb{E}\left\vert \Delta v\right\vert
^{2}.
\end{array}
\right.
\]
This completes the proof.
\end{proof}

Let $\left(  \eta,\zeta,V\right)  $ be the solution to the following
BS$\Delta$E,%

\[
\left\{
\begin{array}
[c]{rcl}%
\Delta\eta_{t} & = & -f_{x}\left(  t+1\right)  \xi_{t+1}-f_{y}\left(
t+1\right)  \eta_{t+1}-\delta_{\left(  t+1\right)  s}f_{u}\left(  t+1\right)
\varepsilon\Delta v\\
&  & -\sum_{i=1}^{d}f_{z_{i}}\left(  t+1\right)  \zeta_{t+1}e_{i}+\zeta
_{t}\Delta W_{t}+\Delta V_{t},\\
\eta_{T} & = & 0.
\end{array}
\right.
\]
Notice that $f_{x}\left(  T\right)  =f_{x}\left(  T,\bar{X}_{T},\bar{Y}%
_{T},\bar{u}_{T}\right)  $ since $f$ is independent of $Z$, also as
$f_{y}\left(  T\right)  $, $f_{u}\left(  T\right)  $.

It is easy to check that%
\begin{align*}
\sup_{0\leq t\leq T}\mathbb{E}\left\vert \eta_{t}\right\vert ^{2}  &  \leq
C\varepsilon^{2}\mathbb{E}\left\vert \Delta v\right\vert ^{2},\\
\sup_{0\leq t\leq T-1}\mathbb{E}\left\Vert \zeta_{t}\right\Vert ^{2}  &  \leq
C\varepsilon^{2}\mathbb{E}\left\vert \Delta v\right\vert ^{2}.
\end{align*}

and we have the following result:

\begin{lemma}
\label{yz_approxi}Under Assumption \ref{control_assumption}, we have
\begin{align*}
\sup_{0\leq t\leq T}\mathbb{E}\left\vert Y_{t}^{\varepsilon}-\bar{Y}_{t}%
-\eta_{t}\right\vert ^{2}  &  =o\left(  \varepsilon^{2}\right)  ,\\
\sup_{0\leq t\leq T-1}\mathbb{E}\left\Vert Z_{t}^{\varepsilon}-\bar{Z}%
_{t}-\zeta_{t}\right\Vert ^{2}  &  =o\left(  \varepsilon^{2}\right)  .
\end{align*}

\end{lemma}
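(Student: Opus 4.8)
The plan is to mimic the scheme of Lemma \ref{x_approxi}, but to run a \emph{backward} induction on $t$ built on the conditional-expectation representation of the BS$\Delta$E solution obtained in Theorem \ref{bsde_result_l}. First I would introduce the error processes
\[
\widetilde{Y}_t=Y_t^\varepsilon-\bar{Y}_t-\eta_t,\qquad\widetilde{Z}_t=Z_t^\varepsilon-\bar{Z}_t-\zeta_t,\qquad\widetilde{N}_t=N_t^\varepsilon-\bar{N}_t-V_t.
\]
Since $Y_T^\varepsilon=\bar{Y}_T=y_T$ and $\eta_T=0$, the terminal condition is $\widetilde{Y}_T=0$. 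Writing $\Xi_{t+1}^\varepsilon=Y_{t+1}^\varepsilon+f(t+1,X_{t+1}^\varepsilon,Y_{t+1}^\varepsilon,Z_{t+1}^\varepsilon,u_{t+1}^\varepsilon)$, and defining $\bar{\Xi}_{t+1}$ from $(\bar{X},\bar{Y},\bar{Z},\bar{u})$ and $\Xi_{t+1}^\eta=\eta_{t+1}+f_x(t+1)\xi_{t+1}+f_y(t+1)\eta_{t+1}+\delta_{(t+1)s}f_u(t+1)\varepsilon\Delta v+\sum_i f_{z_i}(t+1)\zeta_{t+1}e_i$ from the variational BS$\Delta$E, the representation in Theorem \ref{bsde_result_l} gives, with $\widetilde{\Xi}_{t+1}=\Xi_{t+1}^\varepsilon-\bar{\Xi}_{t+1}-\Xi_{t+1}^\eta$,
\[
\widetilde{Y}_t=\mathbb{E}\left[\widetilde{\Xi}_{t+1}|\mathcal{F}_t\right],\qquad\widetilde{Z}_t=\mathbb{E}\left[\widetilde{\Xi}_{t+1}\left(\Delta W_t\right)^\ast|\mathcal{F}_t\right].
\]

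Next I would expand the generator increment in the integral (mean-value) form used in Lemma \ref{x_approxi}, i.e. $f^\varepsilon(t+1)-\bar{f}(t+1)=\widetilde{f}_x(X^\varepsilon-\bar{X})+\widetilde{f}_y(Y^\varepsilon-\bar{Y})+\sum_i\widetilde{f}_{z_i}(Z^\varepsilon-\bar{Z})e_i+\widetilde{f}_u\delta_{(t+1)s}\varepsilon\Delta v$ at time $t+1$, with $\widetilde{f}_\mu(t+1)=\int_0^1 f_\mu(t+1,\bar{X}_{t+1}+\lambda(X_{t+1}^\varepsilon-\bar{X}_{t+1}),\dots,\bar{u}_{t+1}+\lambda(u_{t+1}^\varepsilon-\bar{u}_{t+1}))\,d\lambda$. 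The crucial bookkeeping is to pair the error processes $\widetilde{Y},\widetilde{Z}$ with the \emph{full} averaged coefficients and the variational processes $\xi,\eta,\zeta$ with the coefficient \emph{increments}, which yields
\[
\widetilde{\Xi}_{t+1}=\left(I+\widetilde{f}_y(t+1)\right)\widetilde{Y}_{t+1}+\sum_{i=1}^d\widetilde{f}_{z_i}(t+1)\widetilde{Z}_{t+1}e_i+\rho_{t+1},
\]
where
\[
\begin{aligned}
\rho_{t+1}={}&\widetilde{f}_x(t+1)\left(X_{t+1}^\varepsilon-\bar{X}_{t+1}-\xi_{t+1}\right)+\left(\widetilde{f}_x-f_x\right)(t+1)\,\xi_{t+1}\\
&+\left(\widetilde{f}_y-f_y\right)(t+1)\,\eta_{t+1}+\sum_{i=1}^d\left(\widetilde{f}_{z_i}-f_{z_i}\right)(t+1)\,\zeta_{t+1}e_i+\left(\widetilde{f}_u-f_u\right)(t+1)\,\delta_{(t+1)s}\varepsilon\Delta v.
\end{aligned}
\]
With this grouping, $\rho_{t+1}$ contains only a bounded coefficient times the higher-order forward error $X^\varepsilon-\bar{X}-\xi$ (which is $o(\varepsilon^2)$ in $L^2$ by Lemma \ref{x_approxi}) plus coefficient increments $(\widetilde{f}_\mu-f_\mu)$ — bounded by Assumption \ref{control_assumption} and tending to $0$ a.s. as $\varepsilon\to0$ by continuity of the derivatives and the $L^2$-convergences of the preceding lemmas — each multiplied by a variational process.

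The hard part will be establishing $\mathbb{E}|\rho_{t+1}|^2=o(\varepsilon^2)$ for the increment terms, because the naive bound ``(quantity vanishing a.s. and bounded)$\times O(\varepsilon)$'' does \emph{not} give $o(\varepsilon)$ in $L^2$. As in the treatment of the term $\|\widetilde{b}_x-b_x\|^2|\xi_{t-1}|^2$ in Lemma \ref{x_approxi}, the resolution is the exact linear scaling $\xi_{t+1}=\varepsilon\widehat{\xi}_{t+1}$, $\eta_{t+1}=\varepsilon\widehat{\eta}_{t+1}$, $\zeta_{t+1}=\varepsilon\widehat{\zeta}_{t+1}$, where the hatted processes solve the same (linear, fixed-coefficient) variational equations driven by $\Delta v$ in place of $\varepsilon\Delta v$ and are therefore independent of $\varepsilon$. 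Then $\varepsilon^{-2}\mathbb{E}|(\widetilde{f}_\mu-f_\mu)\,\eta_{t+1}|^2=\mathbb{E}|(\widetilde{f}_\mu-f_\mu)\,\widehat{\eta}_{t+1}|^2\to0$ by dominated convergence, the integrand being dominated by a fixed multiple of $|\widehat{\eta}_{t+1}|^2\in L^1$ (and likewise for $\widehat{\xi},\widehat{\zeta}$ and the $\Delta v$ term); the required a.s. convergence of $\widetilde{f}_\mu-f_\mu$ is obtained along subsequences from the $L^2$ estimates already proved.

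Finally, using $\mathbb{E}|\widetilde{Y}_t|^2\le\mathbb{E}|\widetilde{\Xi}_{t+1}|^2$, the conditional Cauchy--Schwarz inequality together with $\mathbb{E}[|\Delta W_t|^2|\mathcal{F}_t]=d$ to get $\mathbb{E}\|\widetilde{Z}_t\|^2\le d\,\mathbb{E}|\widetilde{\Xi}_{t+1}|^2$, and the boundedness of $I+\widetilde{f}_y$ and $\widetilde{f}_{z_i}$, I would obtain
\[
\mathbb{E}|\widetilde{Y}_t|^2+\mathbb{E}\|\widetilde{Z}_t\|^2\le C\left(\mathbb{E}|\widetilde{Y}_{t+1}|^2+\mathbb{E}\|\widetilde{Z}_{t+1}\|^2\right)+C\,\mathbb{E}|\rho_{t+1}|^2.
\]
Starting from $\widetilde{Y}_T=0$ and $\mathbb{E}|\rho_{t+1}|^2=o(\varepsilon^2)$, a finite backward induction over $t=T-1,T-2,\dots,0$ propagates the $o(\varepsilon^2)$ bound with a constant depending only on $T$ and the Lipschitz/boundedness constants, yielding $\sup_{0\le t\le T}\mathbb{E}|\widetilde{Y}_t|^2=o(\varepsilon^2)$ and $\sup_{0\le t\le T-1}\mathbb{E}\|\widetilde{Z}_t\|^2=o(\varepsilon^2)$, which is the assertion.
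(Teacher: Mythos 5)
Your proposal follows essentially the same route as the paper's proof: the conditional-expectation (backward recursion) representation of the BS$\Delta$E solution, the mean-value (integral) expansion of the generator increment, exactly the same grouping that pairs the error processes $\widetilde{Y},\widetilde{Z}$ with the averaged coefficients and the variational processes $\xi,\eta,\zeta$ with the coefficient increments, and a finite backward induction from $\widetilde{Y}_T=0$. If anything, your scaling argument $\xi=\varepsilon\widehat{\xi}$, $\eta=\varepsilon\widehat{\eta}$, $\zeta=\varepsilon\widehat{\zeta}$ combined with dominated convergence and a subsequence extraction is a more careful justification of the $o\left(\varepsilon^{2}\right)$ step than the paper's, which simply asserts the limit after noting $\widetilde{f}_{\mu}-f_{\mu}\rightarrow0$.
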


\begin{proof}
When $t=T$, $Y_{T}^{\varepsilon}-\bar{Y}_{T}-\eta_{T}=0$.

When $t\in\left\{  0,1,...,T-1\right\}  $, we have%
\begin{align*}
&
\begin{array}
[c]{cc}
& Y_{t}^{\varepsilon}-\bar{Y}_{t}-\eta_{t}%
\end{array}
\\
&
\begin{array}
[c]{cl}%
= & \mathbb{E}\left[  Y_{t+1}^{\varepsilon}+f^{\varepsilon}\left(  t+1\right)
-\bar{Y}_{t+1}-\overline{f}\left(  t+1\right)  -\eta_{t+1}-f_{x}\left(
t+1\right)  \xi_{t+1}\right. \\
& \left.  -f_{y}\left(  t+1\right)  \eta_{t+1}-\sum_{i=1}^{d}f_{z_{i}}\left(
t+1\right)  \zeta_{t+1}e_{i}-\delta_{\left(  t+1\right)  s}f_{u}\left(
t+1\right)  \varepsilon\Delta v|\mathcal{F}_{t}\right]
\end{array}
\\
&
\begin{array}
[c]{cr}%
= & \mathbb{E}\left[  Y_{t+1}^{\varepsilon}-\bar{Y}_{t+1}-\eta_{t+1}%
+\widetilde{f}_{x}\left(  t+1\right)  \left(  X_{t+1}^{\varepsilon}-\bar
{X}_{t+1}\right)  +\widetilde{f}_{y}\left(  t+1\right)  \left(  Y_{t+1}%
^{\varepsilon}-\bar{Y}_{t+1}\right)  \right. \\
& +\sum_{i=1}^{d}\widetilde{f}_{z_{i}}\left(  t+1\right)  \left(
Z_{t+1}^{\varepsilon}-\bar{Z}_{t+1}\right)  e_{i}+\delta_{\left(  t+1\right)
s}\widetilde{f}_{u}\left(  t+1\right)  \varepsilon\Delta v-f_{x}\left(
t+1\right)  \xi_{t+1}\\
& \left.  -f_{y}\left(  t+1\right)  \eta_{t+1}-\sum_{i=1}^{d}f_{z_{i}}\left(
t+1\right)  \zeta_{t+1}e_{i}-\delta_{\left(  t+1\right)  s}f_{u}\left(
t+1\right)  \varepsilon\Delta v|\mathcal{F}_{t}\right]  ,
\end{array}
\end{align*}
where%
\[
\widetilde{f}_{\mu}\left(  t\right)  =\int_{0}^{1}f_{\mu}(t,\bar{X}%
_{t}+\lambda\left(  X_{t}^{\varepsilon}-\bar{X}_{t}\right)  ,\bar{Y}%
_{t}+\lambda\left(  Y_{t}^{\varepsilon}-\bar{Y}_{t}\right)  ,\bar{Z}%
_{t}+\lambda\left(  Z_{t}^{\varepsilon}-\bar{Z}_{t}\right)  ,\bar{u}%
_{t}+\lambda\left(  u_{t}^{\varepsilon}-\bar{u}_{t}\right)  d\lambda
\]
for $\mu=x$, $y$, $z_{i}$ and $u$. Then,
\begin{align*}
&  \mathbb{E}\left\vert Y_{t}^{\varepsilon}-\bar{Y}_{t}-\eta_{t}\right\vert
^{2}\\
&  \leq C\mathbb{E}\left[  \left\vert Y_{t+1}^{\varepsilon}-\bar{Y}_{t+1}%
-\eta_{t+1}\right\vert ^{2}\right. \\
&  +\left\vert \widetilde{f}_{x}\left(  t+1\right)  \left(  X_{t+1}%
^{\varepsilon}-\bar{X}_{t+1}-\xi_{t+1}\right)  \right\vert ^{2}+\left\vert
\left[  \widetilde{f}_{x}\left(  t+1\right)  -f_{x}\left(  t+1\right)
\right]  \xi_{t+1}\right\vert ^{2}\\
&  +\left\vert \widetilde{f}_{y}\left(  t+1\right)  \left(  Y_{t+1}%
^{\varepsilon}-\bar{Y}_{t+1}-\eta_{t+1}\right)  \right\vert ^{2}+\left\vert
\left[  \widetilde{f}_{y}\left(  t+1\right)  -f_{y}\left(  t+1\right)
\right]  \eta_{t+1}\right\vert ^{2}\\
&  +\sum_{i=1}^{d}\left\vert \widetilde{f}_{z_{i}}\left(  t+1\right)  \left(
Z_{t+1}^{\varepsilon}-\bar{Z}_{t+1}-\zeta_{t+1}\right)  e_{i}\right\vert
^{2}+\sum_{i=1}^{d}\left\vert \left[  \widetilde{f}_{z_{i}}\left(  t+1\right)
-f_{z_{i}}\left(  t+1\right)  \right]  \zeta_{t+1}e_{i}\right\vert ^{2}\\
&  \left.  +\delta_{\left(  t+1\right)  s}\left\vert \left[  \widetilde{f}%
_{u}\left(  t+1\right)  -f_{u}\left(  t+1\right)  \right]  \varepsilon\Delta
v\right\vert ^{2}\right]
\end{align*}
and%
\begin{align*}
&  \mathbb{E}\left\Vert Z_{t}^{\varepsilon}-\bar{Z}_{t}-\zeta_{t}\right\Vert
^{2}\\
&  \leq C\mathbb{E}\left[  \left\vert Y_{t+1}^{\varepsilon}-\bar{Y}_{t+1}%
-\eta_{t+1}\right\vert ^{2}\right. \\
&  +\left\vert \widetilde{f}_{x}\left(  t+1\right)  \left(  X_{t+1}%
^{\varepsilon}-\bar{X}_{t+1}-\xi_{t+1}\right)  \right\vert ^{2}+\left\vert
\left[  \widetilde{f}_{x}\left(  t+1\right)  -f_{x}\left(  t+1\right)
\right]  \xi_{t+1}\right\vert ^{2}\\
&  +\left\vert \widetilde{f}_{y}\left(  t+1\right)  \left(  Y_{t+1}%
^{\varepsilon}-\bar{Y}_{t+1}-\eta_{t+1}\right)  \right\vert ^{2}+\left\vert
\left[  \widetilde{f}_{y}\left(  t+1\right)  -f_{y}\left(  t+1\right)
\right]  \eta_{t+1}\right\vert ^{2}\\
&  +\sum_{i=1}^{d}\left\vert \widetilde{f}_{z_{i}}\left(  t+1\right)  \left(
Z_{t+1}^{\varepsilon}-\bar{Z}_{t+1}-\zeta_{t+1}\right)  e_{i}\right\vert
^{2}+\sum_{i=1}^{d}\left\vert \left[  \widetilde{f}_{z_{i}}\left(  t+1\right)
-f_{z_{i}}\left(  t+1\right)  \right]  \zeta_{t+1}e_{i}\right\vert ^{2}\\
&  \left.  +\delta_{\left(  t+1\right)  s}\left\vert \left[  \widetilde{f}%
_{u}\left(  t+1\right)  -f_{u}\left(  t+1\right)  \right]  \varepsilon\Delta
v\right\vert ^{2}\right]  .
\end{align*}
Notice that $\widetilde{f}_{x}\left(  t\right)  -f_{x}\left(  t\right)
\rightarrow0,$ $\widetilde{f}_{y}\left(  t\right)  -f_{y}\left(  t\right)
\rightarrow0,$ $\widetilde{f}_{z_{i}}\left(  t\right)  -f_{z_{i}}\left(
t\right)  \rightarrow0,$ $\widetilde{f}_{u}\left(  t\right)  -f_{u}\left(
t\right)  \rightarrow0$ as $\varepsilon\rightarrow0$. We obtain that%
\[%
\begin{array}
[c]{ccc}%
\lim_{\varepsilon\rightarrow0}\frac{1}{\varepsilon^{2}}\mathbb{E}\left\vert
Y_{t}^{\varepsilon}-\bar{Y}_{t}-\eta_{t}\right\vert ^{2} & = & 0,\\
\lim_{\varepsilon\rightarrow0}\frac{1}{\varepsilon^{2}}\mathbb{E}\left\Vert
Z_{t}^{\varepsilon}-\bar{Z}_{t}-\zeta_{t}\right\Vert ^{2} & = & 0.
\end{array}
\]
This completes the proof.
\end{proof}

By Lemma \ref{x_approxi} and Lemma \ref{yz_approxi}, we have%
\begin{align*}
&
\begin{array}
[c]{cc}
& J\left(  u^{\varepsilon}\left(  \cdot\right)  \right)  -J\left(  \bar
{u}\left(  \cdot\right)  \right)
\end{array}
\\
&
\begin{array}
[c]{cc}%
= & \mathbb{E}\sum_{t=0}^{T-1}\left[  \left\langle l_{x}\left(  t\right)
,\xi_{t}\right\rangle +\left\langle l_{y}\left(  t\right)  ,\eta
_{t}\right\rangle +\sum_{i=1}^{d}\left\langle l_{z_{i}}\left(  t\right)
,\zeta_{t}e_{i}\right\rangle +\delta_{ts}\left\langle l_{u}\left(  s\right)
,\varepsilon\Delta v\right\rangle \right] \\
& +\mathbb{E}\left\langle h_{x}\left(  \bar{X}_{T}\right)  ,\xi_{T}%
\right\rangle +o\left(  \varepsilon\right)  .
\end{array}
\end{align*}

Introducing the following adjoint equation:%

\begin{equation}
\left\{
\begin{array}
[c]{rcl}%
\Delta p_{t} & = & -b_{x}^{\ast}\left(  t+1\right)  p_{t+1}-\sum_{i=1}%
^{d}\sigma_{ix}^{\ast}\left(  t+1\right)  q_{t+1}e_{i}\\
&  & +f_{x}^{\ast}\left(  t+1\right)  k_{t+1}+l_{x}\left(  t+1\right)
+q_{t}\Delta W_{t}+\Delta Q_{t},\\
\Delta k_{t} & = & f_{y}^{\ast}\left(  t\right)  k_{t}+l_{y}\left(  t\right)
+\sum_{i=1}^{d}\left[  f_{z_{i}}^{\ast}\left(  t\right)  k_{t}+l_{z_{i}%
}\left(  t\right)  \right]  \Delta W_{t}^{i},\\
p_{T} & = & -h_{x}\left(  \bar{X}_{T}\right)  ,\\
k_{0} & = & 0,
\end{array}
\right.  \label{adjoint_eq_pc}%
\end{equation}
where $W$ and $Q$ are square integrable martingale processes and $Q$ is
strongly orthogonal to $W$.

Obviously the forward equation in (\ref{adjoint_eq_pc}) admits a unique
solution $k\in\mathcal{M}^{2}\left(  0,T;\mathbb{R}^{n}\right)  $. Then, based
on the solution $k$, according to Theorem \ref{bsde_result_l}, the backward
equation in (\ref{adjoint_eq_pc}) has a unique solution $\left(  p,q,Q\right)
\in\mathcal{M}^{2}\left(  0,T;\mathbb{R}^{m}\right)  \times\mathcal{M}%
^{2}\left(  0,T-1;\mathbb{R}^{m\times d}\right)  \times\mathcal{M}^{2}\left(
0,T;\mathbb{R}^{m}\right)  $. So FBS$\Delta$E has a unique solution $\left(
p,q,Q,k\right)  $.

We obtain the following maximum principle for the optimal control problem
(\ref{c5_eq_w_pc_state_eq})-(\ref{c5_eq_w_pc_cost_eq}).

Define the Hamiltonian function%
\[%
\begin{array}
[c]{r}%
H\left(  \omega,t,u,x,y,z,p,q,k\right)  =b^{\ast}\left(  \omega,t,x,u\right)
p+\sum_{i=1}^{d}\sigma_{i}^{\ast}\left(  \omega,t,x,u\right)  qe_{i}\\
-f^{\ast}\left(  \omega,t,x,y,z,u\right)  k-l\left(  \omega,t,x,y,z,u\right)
.
\end{array}
\]

\begin{theorem}
Suppose that Assumption (\ref{control_assumption}) holds. Let $\bar{u}$ be an
optimal control of the problem (\ref{c5_eq_w_pc_state_eq}%
)-(\ref{c5_eq_w_pc_cost_eq}), $\left(  \bar{X},\bar{Y},\bar{Z}\right)  $ be
the corresponding optimal trajectory and $\left(  p,q,k\right)  $ be the
solution to the adjoint equation (\ref{adjoint_eq_pc}). Then for any
$t\in\left\{  0,1,...,T\right\}  $, for any $v\in U_{t}$, we have%
\begin{equation}
\left\langle H_{u}\left(  t,\bar{u}_{t},\bar{X}_{t},\bar{Y}_{t},\bar{Z}%
_{t},p_{t},q_{t},k_{t}\right)  ,v-\bar{u}_{t}\right\rangle \leq0,\text{
}P-a.s.. \label{MP_result_pc}%
\end{equation}

\end{theorem}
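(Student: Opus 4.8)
The plan is to combine the first-order expansion of the cost functional already obtained with a summation-by-parts (duality) argument driven by the adjoint equation (\ref{adjoint_eq_pc}), so as to re-express the gradient terms $l_x,l_y,l_{z_i},h_x$ solely through $H_u$ and the control perturbation. Since $\bar u$ is optimal and $u^\varepsilon$ is admissible, $J(u^\varepsilon)-J(\bar u)\ge 0$ for every $\varepsilon\in[0,1]$; dividing the expansion by $\varepsilon$ and letting $\varepsilon\to 0^{+}$ yields a variational inequality, so the whole task reduces to rewriting the first variation. Throughout I fix the perturbation time $s$; since $s$ is arbitrary in $\{0,\dots,T\}$, the conclusion for all $t$ follows at the end.

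First I would run the forward duality. Applying the stated product rule $\Delta\langle \xi_t,p_t\rangle=\langle \xi_{t+1},\Delta p_t\rangle+\langle\Delta\xi_t,p_t\rangle$ to the variation $\xi$ and the backward adjoint $(p,q,Q)$ and summing over $t$, the telescoping left side is $\mathbb E\langle \xi_T,p_T\rangle=-\mathbb E\langle h_x(\bar X_T),\xi_T\rangle$ because $\xi_0=0$ and $p_T=-h_x(\bar X_T)$. On the right I would discard the martingale increments carefully: the $q_t\Delta W_t$ and $\Delta Q_t$ terms paired against the $\mathcal F_t$-measurable part of $\Delta\xi_t$ vanish, while the genuinely new contribution comes from pairing the $\sum_i[\cdots]\Delta W_t^i$ part of $\Delta\xi_t$ against $q_t\Delta W_t$, which by $\mathbb E[\Delta W_t^i\Delta W_t^j\mid\mathcal F_t]=\delta_{ij}$ produces $\sum_i\langle \sigma_{ix}\xi_t+\delta_{ts}\varepsilon\sigma_{iu}\Delta v,\,q_te_i\rangle$; the $\Delta Q_t$ cross term dies by strong orthogonality of $Q$ to $W$. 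Reindexing $\langle\xi_{t+1},\Delta p_t\rangle$ and using the conventions $b(T,\cdot)=\sigma(T,\cdot)=0$ and $\xi_0=0$, the $b_x$ and $\sigma_{ix}$ pieces cancel against $\langle\Delta\xi_t,p_t\rangle$, leaving an identity that reproduces the $l_x$ sum of the expansion exactly and isolates a residual cross term $\mathbb E\sum_\tau\langle f_x(\tau)\xi_\tau,k_\tau\rangle$ together with the control terms $\langle \varepsilon\Delta v,\,b_u^*p_s+\sum_i\sigma_{iu}^*q_se_i\rangle$ at $t=s$.

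Next I would perform the backward duality on $(\eta,\zeta,V)$ and the forward adjoint $k$ via $\Delta\langle k_t,\eta_t\rangle=\langle k_{t+1},\Delta\eta_t\rangle+\langle\Delta k_t,\eta_t\rangle$, whose telescoping sum is zero since $k_0=0$ and $\eta_T=0$. The same bookkeeping (now using $\mathbb E[\Delta W^i\Delta W^j\mid\mathcal F_t]=\delta_{ij}$ to pair the $\Delta W$-part of $\Delta k_t$ with $\zeta_t\Delta W_t$, and orthogonality of $V$ to $W$) and the cancellation of the $f_y$ and $f_{z_i}$ terms (using $f_{z_i}(T)=0$, $f(0,\cdot)=0$, $k_0=0$, $\eta_T=0$) turn this into an identity expressing the same cross term $\mathbb E\sum_\tau\langle k_\tau,f_x(\tau)\xi_\tau\rangle$ in terms of the $l_y$ and $l_{z_i}$ sums of the expansion plus a single control term $\langle f_u^*(s)k_s,\varepsilon\Delta v\rangle$. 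Substituting this into the forward identity cancels the residual cross term, so the whole collection $\mathbb E\langle h_x,\xi_T\rangle+\mathbb E\sum_t[\langle l_x,\xi_t\rangle+\langle l_y,\eta_t\rangle+\sum_i\langle l_{z_i},\zeta_te_i\rangle]$ collapses to $\mathbb E\langle f_u^*(s)k_s-b_u^*(s)p_s-\sum_i\sigma_{iu}^*(s)q_se_i,\,\varepsilon\Delta v\rangle$.

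Plugging this back into the expansion and recalling $H_u=b_u^*p+\sum_i\sigma_{iu}^*qe_i-f_u^*k-l_u$, the first variation becomes $J(u^\varepsilon)-J(\bar u)=-\varepsilon\,\mathbb E\langle H_u(s,\bar u_s,\bar X_s,\bar Y_s,\bar Z_s,p_s,q_s,k_s),\Delta v\rangle+o(\varepsilon)$. Optimality then gives $\mathbb E\langle H_u(s),\Delta v\rangle\le 0$ for every admissible $\mathcal F_s$-measurable perturbation with $\bar u_s+\Delta v\in U_s$. Finally I would localize: for fixed $v\in U_s$ and $A\in\mathcal F_s$ take $\Delta v=(v-\bar u_s)\mathbf{1}_A$, admissible by convexity of $U_s$, to get $\mathbb E[\mathbf{1}_A\langle H_u(s),v-\bar u_s\rangle]\le 0$ for all $A$, whence the pointwise inequality (\ref{MP_result_pc}) holds $P$-a.s. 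The main obstacle is the duality step itself: keeping the two summation-by-parts identities correctly interlocked so the $\langle f_x\xi,k\rangle$ term cancels, and discarding versus retaining precisely the right martingale cross terms — which is exactly where the chosen product rule and the time-shift in the generator $f$ are indispensable.
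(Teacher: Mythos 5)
Your proposal is correct and follows essentially the same route as the paper: the same product rule $\Delta\langle \xi_t,p_t\rangle=\langle\xi_{t+1},\Delta p_t\rangle+\langle\Delta\xi_t,p_t\rangle$, the same use of $\mathbb{E}\left[\Delta W_t^i\Delta W_t^j|\mathcal{F}_t\right]=\delta_{ij}$ and strong orthogonality to dispose of the martingale cross terms, the same telescoping with $\xi_0=0$, $k_0=0$, $\eta_T=0$, $p_T=-h_x(\bar{X}_T)$ and the conventions $b(T,\cdot)=\sigma(T,\cdot)=l(T,\cdot)=f(0,\cdot)\equiv 0$, leading to the identical dual relation and variational inequality (the paper merely sums $\Delta(\langle\xi_t,p_t\rangle+\langle\eta_t,k_t\rangle)$ jointly rather than running the two dualities separately and cancelling the $\langle f_x\xi,k\rangle$ term afterwards, which is an equivalent bookkeeping). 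Your explicit localization step with $\Delta v=(v-\bar{u}_s)\mathbf{1}_A$, $A\in\mathcal{F}_s$, correctly fills in the passage from the integrated inequality to the pointwise $P$-a.s.\ statement that the paper compresses into ``since $s$ is taking arbitrarily.''
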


\begin{proof}
For $t\in\left\{  0,1,...,T-1\right\}  $, we have%
\begin{equation}%
\begin{array}
[c]{rl}
& \Delta\left\langle \xi_{t},p_{t}\right\rangle \\
= & \left\langle \xi_{t+1},\Delta p_{t}\right\rangle +\left\langle \Delta
\xi_{t},p_{t}\right\rangle \\
= & \left\langle \xi_{t+1},-b_{x}^{\ast}\left(  t+1\right)  p_{t+1}-\sum
_{i=1}^{d}\sigma_{ix}^{\ast}\left(  t+1\right)  q_{t+1}e_{i}+f_{x}^{\ast
}\left(  t+1\right)  k_{t+1}+l_{x}\left(  t+1\right)  \right\rangle \\
& +\left\langle \sum_{j=1}^{d}\left[  \sigma_{jx}\left(  t\right)  \xi
_{t}+\delta_{ts}\varepsilon\sigma_{ju}\left(  t\right)  \Delta v\right]
\Delta W_{t}^{j},q_{t}\Delta W_{t}\right\rangle +\left\langle b_{x}\left(
t\right)  \xi_{t}+\delta_{ts}b_{u}\left(  t\right)  \varepsilon\Delta
v,p_{t}\right\rangle +\Phi_{t},
\end{array}
\label{product-rule-1}%
\end{equation}
where%
\[%
\begin{array}
[c]{ccl}%
\Phi_{t} & = & \left\langle \xi_{t}+b_{x}\left(  t\right)  \xi_{t}+\delta
_{ts}b_{u}\left(  t\right)  \varepsilon\Delta v,q_{t}\Delta W_{t}\right\rangle
+\left\langle \sum_{i=1}^{d}\left[  \sigma_{ix}\left(  t\right)  \xi
_{t}+\delta_{ts}\varepsilon\sigma_{iu}\left(  t\right)  \Delta v\right]
\Delta W_{t}^{i},p_{t}\right\rangle \\
&  & +\left\langle \xi_{t}+b_{x}\left(  t\right)  \xi_{t}+\delta_{ts}%
b_{u}\left(  t\right)  \varepsilon\Delta v,\Delta Q_{t}\right\rangle
+\left\langle \sum_{j=1}^{d}\left[  \sigma_{jx}\left(  t\right)  \xi
_{t}+\delta_{ts}\varepsilon\sigma_{ju}\left(  t\right)  \Delta v\right]
\Delta W_{t}^{j},\Delta Q_{t}\right\rangle .
\end{array}
\]
It is obvious that $\mathbb{E}\left[  \Phi_{t}\right]  =0$. We have%
\[%
\begin{array}
[c]{rl}%
\mathbb{E}\left\langle \sum_{j=1}^{d}\sigma_{jx}\left(  t\right)  \xi
_{t}\Delta W_{t}^{j},q_{t}\Delta W_{t}\right\rangle = & \mathbb{E}\left\langle
\sum_{j=1}^{d}\sigma_{jx}\left(  t\right)  \xi_{t}\Delta W_{t}^{j},\sum
_{i=1}^{d}q_{t}e_{i}\Delta W_{t}^{i}\right\rangle \\
= & \mathbb{E}\left[  \sum_{i=1}^{d}\left\langle \xi_{t},\sum_{j=1}^{d}%
\sigma_{jx}^{\ast}\left(  t\right)  q_{t}e_{i}\mathbb{E}\left[  \Delta
W_{t}^{i}\Delta W_{t}^{j}|\mathcal{F}_{t}\right]  \right\rangle \right] \\
= & \mathbb{E}\left[  \sum_{i=1}^{d}\left\langle \xi_{t},\sigma_{ix}^{\ast
}\left(  t\right)  q_{t}e_{i}\right\rangle \right]  ,
\end{array}
\]
and%
\[
\mathbb{E}\left\langle \sum_{j=1}^{d}\delta_{ts}\varepsilon\sigma_{ju}\left(
t\right)  \Delta v\Delta W_{t}^{j},q_{t}\Delta W_{t}\right\rangle
=\mathbb{E}\left[  \delta_{ts}\varepsilon\sum_{j=1}^{d}\left\langle
\sigma_{ju}\left(  t\right)  \Delta v,q_{t}e_{j}\right\rangle \right]  .
\]

Similarly, it can be shown that for $t\in\left\{  0,1,...,T-1\right\}  $, we
have%
\[%
\begin{array}
[c]{rl}
& \Delta\left\langle \eta_{t},k_{t}\right\rangle \\
= & \left\langle -f_{x}\left(  t+1\right)  \xi_{t+1}-f_{y}\left(  t+1\right)
\eta_{t+1}-\sum_{i=1}^{d}f_{z_{i}}\left(  t+1\right)  \zeta_{t+1}e_{i}%
-\delta_{\left(  t+1\right)  s}f_{u}\left(  t+1\right)  \varepsilon\Delta
v,k_{t+1}\right\rangle \\
& +\left\langle \zeta_{t}\Delta W_{t},\sum_{i=1}^{d}\left[  f_{z_{i}}^{\ast
}\left(  t\right)  k_{t}+l_{z_{i}}\left(  t\right)  \right]  \Delta W_{t}%
^{i}\right\rangle +\left\langle \eta_{t},f_{y}^{\ast}\left(  t\right)
k_{t}+l_{y}\left(  t\right)  \right\rangle +\Psi_{t},
\end{array}
\]
where%
\[%
\begin{array}
[c]{ccl}%
\Psi_{t} & = & \left\langle \zeta_{t}\Delta W_{t},k_{t}+f_{y}^{\ast}\left(
t\right)  k_{t}+l_{y}\left(  t\right)  \right\rangle +\left\langle \eta
_{t},\sum_{i=1}^{d}\left[  f_{z_{i}}^{\ast}\left(  t\right)  k_{t}+l_{z_{i}%
}\left(  t\right)  \right]  \Delta W_{t}^{i}\right\rangle \\
&  & +\left\langle k_{t}+f_{y}^{\ast}\left(  t\right)  k_{t}+l_{y}\left(
t\right)  ,\Delta V_{t}\right\rangle +\left\langle \sum_{i=1}^{d}\left[
f_{z_{i}}^{\ast}\left(  t\right)  k_{t}+l_{z_{i}}\left(  t\right)  \right]
\Delta W_{t}^{i},\Delta V_{t}\right\rangle .
\end{array}
\]
It is easy to check that%
\begin{align*}
\mathbb{E}\left\langle \zeta_{t}\Delta W_{t},\sum_{i=1}^{d}f_{z_{i}}^{\ast
}\left(  t\right)  k_{t}\Delta W_{t}^{i}\right\rangle  &  =\mathbb{E}\left[
\sum_{i=1}^{d}\left\langle f_{z_{i}}\left(  t\right)  \zeta_{t}e_{i}%
,k_{t}\right\rangle \right]  ,\\
\mathbb{E}\left\langle \zeta_{t}\Delta W_{t},\sum_{i=1}^{d}l_{z_{i}}\left(
t\right)  \Delta W_{t}^{i}\right\rangle  &  =\mathbb{E}\left[  \sum_{i=1}%
^{d}\left\langle l_{z_{i}}\left(  t\right)  ,\zeta_{t}e_{i}\right\rangle
\right]  .
\end{align*}
Then we have%
\begin{align}
&  \;\;\mathbb{E}\left[  \Delta\left(  \left\langle \xi_{t},p_{t}\right\rangle
+\left\langle \eta_{t},k_{t}\right\rangle \right)  \right]
\label{product-rule-2}\\
&  =\mathbb{E}\left[  \left\langle -b_{x}\left(  t+1\right)  \xi_{t+1}%
,p_{t+1}\right\rangle +\left\langle b_{x}\left(  t\right)  \xi_{t}%
,p_{t}\right\rangle \right. \nonumber\\
&  -\sum_{i=1}^{d}\left\langle \xi_{t+1},\sigma_{ix}^{\ast}\left(  t+1\right)
q_{t+1}e_{i}\right\rangle +\sum_{i=1}^{d}\left\langle \xi_{t},\sigma
_{ix}^{\ast}\left(  t\right)  q_{t}e_{i}\right\rangle \nonumber\\
&  -\left\langle f_{y}\left(  t+1\right)  \eta_{t+1},k_{t+1}\right\rangle
+\left\langle f_{y}\left(  t\right)  \eta_{t},k_{t}\right\rangle \nonumber\\
&  -\sum_{i=1}^{d}\left\langle f_{z_{i}}\left(  t+1\right)  \zeta_{t+1}%
e_{i},k_{t+1}\right\rangle +\sum_{i=1}^{d}\left\langle f_{z_{i}}\left(
t\right)  \zeta_{t}e_{i},k_{t}\right\rangle \nonumber\\
&  +\left\langle l_{x}\left(  t+1\right)  ,\xi_{t+1}\right\rangle
+\left\langle \eta_{t},l_{y}\left(  t\right)  \right\rangle +\sum_{i=1}%
^{d}\left\langle l_{z_{i}}\left(  t\right)  ,\zeta_{t}e_{i}\right\rangle
\nonumber\\
&  +\varepsilon\left\langle \delta_{ts}b_{u}\left(  t\right)  \Delta
v,p_{t}\right\rangle +\delta_{ts}\varepsilon\sum_{i=1}^{d}\left\langle
\sigma_{iu}\left(  t\right)  \Delta v,q_{t}e_{i}\right\rangle \nonumber\\
&  \left.  -\varepsilon\left\langle \delta_{\left(  t+1\right)  s}f_{u}\left(
t+1\right)  \Delta v,k_{t+1}\right\rangle \right]  .\nonumber
\end{align}
Therefore,%
\begin{equation}%
\begin{array}
[c]{cl}
& -\mathbb{E}\left\langle h_{x}\left(  \bar{X}_{T}\right)  ,\xi_{T}%
\right\rangle \\
= & \mathbb{E}\left[  \left\langle \xi_{T},p_{T}\right\rangle +\left\langle
\eta_{T},k_{T}\right\rangle -\left\langle \xi_{0},p_{0}\right\rangle
-\left\langle \eta_{0},k_{0}\right\rangle \right] \\
= & \sum_{t=0}^{T-1}\mathbb{E}\Delta\left(  \left\langle \xi_{t}%
,p_{t}\right\rangle +\left\langle \eta_{t},k_{t}\right\rangle \right) \\
= & \mathbb{E}\left[  \left\langle b_{x}\left(  0\right)  \xi_{0}%
,p_{0}\right\rangle +\sum_{i=1}^{d}\left\langle \xi_{0},\sigma_{ix}^{\ast
}\left(  0\right)  q_{0}e_{i}\right\rangle +\left\langle f_{y}\left(
0\right)  \eta_{0},k_{0}\right\rangle +\sum_{i=1}^{d}\left\langle f_{z_{i}%
}\left(  0\right)  \zeta_{0}e_{i},k_{0}\right\rangle \right] \\
& +\sum_{t=0}^{T-1}\mathbb{E}\left[  \left\langle l_{x}\left(  t\right)
,\xi_{t}\right\rangle +\left\langle l_{y}\left(  t\right)  ,\eta
_{t}\right\rangle +\sum_{i=1}^{d}\left\langle l_{z_{i}}\left(  t\right)
,\zeta_{t}e_{i}\right\rangle \right] \\
& +\sum_{t=0}^{T}\delta_{ts}\varepsilon\mathbb{E}\left[  \left\langle
b_{u}^{\ast}\left(  t\right)  p_{t},\Delta v\right\rangle +\sum_{i=1}%
^{d}\left\langle \sigma_{iu}^{\ast}\left(  t\right)  q_{t}e_{i},\Delta
v\right\rangle -\left\langle f_{u}^{\ast}\left(  t\right)  k_{t},\Delta
v\right\rangle \right]  .
\end{array}
\label{rearrangement}%
\end{equation}
Since $\xi_{0}=0$ and $k_{0}=0$, we deduce%
\begin{equation}%
\begin{array}
[c]{cl}
& \mathbb{E}\sum_{t=0}^{T-1}\left[  \left\langle l_{x}\left(  t\right)
,\xi_{t}\right\rangle +\left\langle l_{y}\left(  t\right)  ,\eta
_{t}\right\rangle +\sum_{i=1}^{d}\left\langle l_{z_{i}}\left(  t\right)
,\zeta_{t}e_{i}\right\rangle \right]  +\mathbb{E}\left\langle h_{x}\left(
\bar{X}_{T}\right)  ,\xi_{T}\right\rangle \\
= & -\varepsilon\mathbb{E}\left[  \left\langle b_{u}^{\ast}\left(  s\right)
p_{s}+\sum_{i=1}^{d}\sigma_{iu}^{\ast}\left(  s\right)  q_{t}e_{i}-f_{u}%
^{\ast}\left(  s\right)  k_{s},\Delta v\right\rangle \right]  .
\end{array}
\label{dual-relation}%
\end{equation}
By $\lim_{\varepsilon\rightarrow0}\frac{1}{\varepsilon}\left[  J\left(
u^{\varepsilon}\left(  \cdot\right)  \right)  -J\left(  \bar{u}\left(
\cdot\right)  \right)  \right]  \geq0$, we obtain%
\[
\mathbb{E}\left[  \left\langle b_{u}^{\ast}\left(  s\right)  p_{s}+\sum
_{i=1}^{d}\sigma_{iu}^{\ast}\left(  s\right)  q_{s}e_{i}-f_{u}^{\ast}\left(
s\right)  k_{s}-l_{u}\left(  s\right)  ,\Delta v\right\rangle \right]  \leq0.
\]
Thus, it is easy to obtain equation (\ref{MP_result_pc}) since $s$ is taking
arbitrarily. This completes the proof.
\end{proof}

\begin{remark}
\label{re-product-rule}In the introduction we point out that we need a
reasonable representation of the product rule. When we calculate
$\Delta\left\langle \xi_{t},p_{t}\right\rangle $ in (\ref{product-rule-1}),
$\Delta\left\langle \xi_{t},p_{t}\right\rangle $ is represented as
$\left\langle \xi_{t+1},\cdot\cdot\cdot\right\rangle +\cdot\cdot\cdot$.
Combining the formulation of the BS$\Delta$E mentioned in the introduction,
this representation will lead to the terms such as $\left\langle \square
_{t},\Diamond_{t}\right\rangle -\left\langle \square_{t+1},\Diamond
_{t+1}\right\rangle $ in (\ref{product-rule-2}). By summing and rearranging
these terms in (\ref{rearrangement}), we obtain the dual relation
(\ref{dual-relation}).
\end{remark}

When $g\equiv0$ and $f\equiv0$, our control system (\ref{c5_eq_w_pc_state_eq}%
)-(\ref{c5_eq_w_pc_cost_eq}) degenerates to the classical discrete control
system which only contains a forward stochastic difference equation as in
\cite{lz15}. For this special case, the adjoint equation becomes%

\begin{equation}
\left\{
\begin{array}
[c]{rcl}%
\Delta p_{t} & = & -b_{x}^{\ast}\left(  t+1\right)  p_{t+1}-\sum_{i=1}%
^{d}\sigma_{ix}^{\ast}\left(  t+1\right)  q_{t+1}e_{i}+l_{x}\left(
t+1\right)  +q_{t}\Delta W_{t}+\Delta Q_{t},\\
p_{T} & = & -h_{x}\left(  \bar{X}_{T}\right)  ,
\end{array}
\right.  \label{zhang}%
\end{equation}
and the Hamiltonian function becomes%
\[
H\left(  \omega,t,u,x,p,q\right)  =b^{\ast}\left(  \omega,t,x,u\right)
p+\sum_{i=1}^{d}\sigma_{i}^{\ast}\left(  \omega,t,x,u\right)  qe_{i}-l\left(
\omega,t,x,u\right)  .
\]
The adjoint equation has the following explicit solution%
\[
\left\{
\begin{array}
[c]{ccl}%
p_{T-1} & = & -\mathbb{E}\left[  h_{x}\left(  \bar{X}_{T}\right)
|\mathcal{F}_{T-1}\right]  ,\\
q_{T-1} & = & -\mathbb{E}\left[  h_{x}\left(  \bar{X}_{T}\right)  \left(
\Delta W_{t}\right)  ^{\ast}|\mathcal{F}_{T-1}\right]  ,\\
p_{t} & = & \mathbb{E}\left[  \left[  I+b_{x}^{\ast}\left(  t+1\right)
\right]  p_{t+1}-l_{x}\left(  t+1\right)  +\sum\limits_{i=1}^{d}\sigma
_{ix}^{\ast}\left(  t+1\right)  q_{t+1}e_{i}|\mathcal{F}_{t}\right]  ,\\
q_{t} & = & \mathbb{E}\left[  \left[  \left[  I+b_{x}^{\ast}\left(
t+1\right)  \right]  p_{t+1}-l_{x}\left(  t+1\right)  +\sum\limits_{i=1}%
^{d}\sigma_{ix}^{\ast}\left(  t+1\right)  q_{t+1}e_{i}\right]  \left(  \Delta
W_{t}\right)  ^{\ast}|\mathcal{F}_{t}\right]  .
\end{array}
\right.
\]
which coincides with the results in \cite{lz15}.

\section{Maximum principle for the fully coupled FBS$\Delta$E system}

In this section we suppose $W$ to be one-dimensional driving process. Let
$\bar{u}=\left\{  \bar{u}_{t}\right\}  _{t=0}^{T}$ be the optimal control for
the control problem (\ref{fbsde_state_eq})-(\ref{fbsde_cost_eq}) and $\left(
\bar{X},\bar{Y},\bar{Z}\right)  $ be the corresponding optimal trajectory.
Note that the existence and uniqueness of $\left(  \bar{X},\bar{Y},\bar
{Z}\right)  $ is guaranteed by the results in \cite{ji-liu}. The perturbed
control $u^{\varepsilon}$ is the same as (\ref{perturb-control}) and we denote
by $\left(  X^{\varepsilon},Y^{\varepsilon},Z^{\varepsilon}\right)
$\thinspace the corresponding trajectory.

Let%
\[
\widehat{X}_{t}=X_{t}^{\varepsilon}-\bar{X}_{t},\text{ }\widehat{Y}_{t}%
=Y_{t}^{\varepsilon}-\bar{Y}_{t},\text{ }\widehat{Z}_{t}=Z_{t}^{\varepsilon
}-\bar{Z}_{t},\text{ }\widehat{N}_{t}=N_{t}^{\varepsilon}-\overline{N}_{t}.
\]

Using the similar notations (\ref{coefficient_notations}) in section 3, we
have%
\begin{equation}
\left\{
\begin{array}
[c]{rcl}%
\Delta\widehat{X}_{t} & = & b^{\varepsilon}\left(  t\right)  -\overline
{b}\left(  t\right)  +\left(  \sigma^{\varepsilon}\left(  t\right)
-\overline{\sigma}\left(  t\right)  \right)  \Delta W_{t},\\
\Delta\widehat{Y}_{t} & = & -f^{\varepsilon}\left(  t+1\right)  +\overline
{f}\left(  t+1\right)  +\widehat{Z}_{t}\Delta W_{t}+\Delta\widehat{N}_{t},\\
\widehat{X}_{0} & = & 0,\\
\widehat{Y}_{T} & = & 0.
\end{array}
\right.  \label{f_c_c_1_difference_1}%
\end{equation}

\begin{lemma}
\label{f_c_c_1_xyz_approxi}Under Assumption \ref{control_assumption} and
Assumption \ref{control_assumption2}, we have%
\begin{equation}
\mathbb{E}\left(  \sum_{t=0}^{T}\left\vert \widehat{X}_{t}\right\vert
^{2}+\sum_{t=0}^{T}\left\vert \widehat{Y}_{t}\right\vert ^{2}+\sum_{t=0}%
^{T-1}\left\vert \widehat{Z}_{t}\right\vert ^{2}\right)  \leq C\varepsilon
^{2}\mathbb{E}\left\vert \Delta v\right\vert ^{2}. \label{x_estimate_1}%
\end{equation}

\end{lemma}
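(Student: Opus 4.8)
The plan is to run the standard fully-coupled a priori estimate, i.e.\ to pair the forward and backward differences through the product rule and then invoke the monotonicity condition (Assumption \ref{control_assumption2}). Using the product rule $\Delta\langle X_t,Y_t\rangle=\langle X_{t+1},\Delta Y_t\rangle+\langle\Delta X_t,Y_t\rangle$ on the pair $(\widehat{X}_t,\widehat{Y}_t)$ from (\ref{f_c_c_1_difference_1}), summing over $t\in\{0,\dots,T-1\}$ and using $\widehat{X}_0=0$, $\widehat{Y}_T=0$ so that the telescoped left side $\langle\widehat{X}_T,\widehat{Y}_T\rangle-\langle\widehat{X}_0,\widehat{Y}_0\rangle$ is $0$, I would arrive at an identity of the form $0=\mathbb{E}\sum_t(\dots)$.

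Second, I would dispose of the stochastic terms. Expanding $\widehat{X}_{t+1}=\widehat{X}_t+(b^\varepsilon(t)-\bar b(t))+(\sigma^\varepsilon(t)-\bar\sigma(t))\Delta W_t$ and taking expectations, every increment of $\widehat{N}$ drops out (since $\widehat{X}_t+b^\varepsilon(t)-\bar b(t)$ is $\mathcal{F}_t$-measurable and $\mathbb{E}[\Delta\widehat N_t\mid\mathcal F_t]=0$, while the remaining contribution vanishes by strong orthogonality of $\widehat N$ to $W$), and each ``linear'' $\Delta W_t$ term has conditional mean zero. The delicate point is the single cross term $\mathbb{E}\langle\widehat{X}_{t+1},\widehat{Z}_t\Delta W_t\rangle$: because $W$ is one-dimensional with $\mathbb{E}[(\Delta W_t)^2\mid\mathcal F_t]=1$, only the $(\sigma^\varepsilon(t)-\bar\sigma(t))\Delta W_t$ part survives and it produces exactly $\mathbb{E}\langle\sigma^\varepsilon(t)-\bar\sigma(t),\widehat{Z}_t\rangle$, which furnishes the $\sigma$-versus-$z$ pairing demanded by the monotone operator $A$. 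After these reductions the identity reads
\[
0=\mathbb{E}\left[\sum_{t=1}^{T}\langle\bar f(t)-f^\varepsilon(t),\widehat{X}_t\rangle+\sum_{t=0}^{T-1}\big(\langle b^\varepsilon(t)-\bar b(t),\widehat{Y}_t\rangle+\langle\sigma^\varepsilon(t)-\bar\sigma(t),\widehat{Z}_t\rangle\big)\right].
\]

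Third, I would shift the index of the $f$-sum and regroup so that for each fixed $t$ the $\widehat X$, $\widehat Y$, $\widehat Z$ contributions are collected into a single bracket $\Theta_t$. Thanks to $\widehat X_0=0$, $\widehat Y_T=0$, and the conventions $b(T,\cdot)=\sigma(T,\cdot)\equiv0$, $f(0,\cdot)\equiv0$, the endpoints $t=0$ and $t=T$ collapse to precisely the $t=0$ and $t=T$ forms of Assumption \ref{control_assumption2}, while for $1\le t\le T-1$ with $t\ne s$ (where the perturbed and optimal controls coincide) the bracket is exactly $\langle A(t,\lambda_1;\bar u_t)-A(t,\lambda_2;\bar u_t),\lambda_1-\lambda_2\rangle$ with $\lambda_1-\lambda_2=(\widehat X_t,\widehat Y_t,\widehat Z_t)$, hence $\Theta_t\le-\alpha(|\widehat X_t|^2+|\widehat Y_t|^2+|\widehat Z_t|^2)$. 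Summing these bounds turns the identity into $0\le-\alpha\,\mathbb{E}\big(\sum_{t}|\widehat X_t|^2+\sum_t|\widehat Y_t|^2+\sum_t|\widehat Z_t|^2\big)+R_s$, where $R_s$ collects the single time slice $t=s$.

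Finally, at $t=s$ I would split each coefficient difference into a ``same-control'' part (which still obeys monotonicity) plus a control-increment part $f(s,\lambda_1,u_s^\varepsilon)-f(s,\lambda_1,\bar u_s)$, and likewise for $b,\sigma$; by the uniform Lipschitz bounds of Assumption \ref{control_assumption} these are dominated by $C\varepsilon|\Delta v|\,(|\widehat X_s|+|\widehat Y_s|+|\widehat Z_s|)$, so Young's inequality absorbs the state factors into the left-hand negative quadratic (keeping a fraction $\tfrac{\alpha}{2}$) and leaves a remainder $\le C\varepsilon^2\mathbb{E}|\Delta v|^2$. Rearranging yields (\ref{x_estimate_1}). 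The main obstacle I anticipate is the bookkeeping in the second and third steps: verifying that, after the expectation kills the martingale parts, the surviving cross term lands exactly on $\widehat Z_t$, and that the index shift lines the three pairings up at a common time $t$ so the sum collapses cleanly into the interior, $t=0$, and $t=T$ monotonicity inequalities; the $t=s$ absorption is then routine.
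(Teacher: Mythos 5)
Your proposal is correct and is essentially the paper's own proof: the same telescoped product-rule identity (with the expectation killing the martingale and orthogonal terms, and the surviving cross term landing exactly on $\langle\sigma^{\varepsilon}(t)-\bar{\sigma}(t),\widehat{Z}_{t}\rangle$), the same regrouping so that the interior brackets and the $t=0$, $t=T$ endpoint brackets match the monotone condition of Assumption \ref{control_assumption2}, and the same treatment of the $t=s$ control-increment slice via the uniform bound on $\varphi_{u}$ and Young's inequality with the $\frac{\alpha}{2}$ absorption. The only cosmetic difference is that the paper applies monotonicity with control $u_{t}^{\varepsilon}$ at every interior time and isolates the residual terms $\widetilde{\varphi}^{\varepsilon}(s)-\bar{\varphi}(s)$ evaluated along the optimal trajectory, which is the same decomposition you describe when you split the $t=s$ slice into a same-control part plus a control-increment part.
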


\begin{proof}
By (\ref{f_c_c_1_difference_1}),%
\[%
\begin{array}
[c]{rl}%
0= & \mathbb{E}\left\langle \widehat{X}_{T},\widehat{Y}_{T}\right\rangle
-\mathbb{E}\left\langle \widehat{X}_{0},\widehat{Y}_{0}\right\rangle \\
= & \mathbb{E}\sum_{t=0}^{T-1}\Delta\left\langle \widehat{X}_{t}%
,\widehat{Y}_{t}\right\rangle \\
= & \mathbb{E}\sum_{t=0}^{T}\left[  \left\langle \widehat{X}_{t}%
,-f^{\varepsilon}\left(  t\right)  +\overline{f}\left(  t\right)
\right\rangle +\left\langle \widehat{Y}_{t},b^{\varepsilon}\left(  t\right)
-\overline{b}\left(  t\right)  \right\rangle +\left\langle \widehat{Z}%
_{t},\sigma^{\varepsilon}\left(  t\right)  -\overline{\sigma}\left(  t\right)
\right\rangle \right] \\
= & \mathbb{E}\sum_{t=1}^{T-1}\left\langle A\left(  t,\lambda_{t}%
^{\varepsilon};u_{t}^{\varepsilon}\right)  -A\left(  t,\overline{\lambda}%
_{t};u_{t}^{\varepsilon}\right)  ,\widehat{\lambda}_{t}\right\rangle \\
& +\mathbb{E}\left\langle \widehat{X}_{T},-f^{\varepsilon}\left(  T\right)
+\widetilde{f}^{\varepsilon}\left(  T\right)  \right\rangle +\left\langle
\widehat{Y}_{0},b^{\varepsilon}\left(  0\right)  -\widetilde{b}^{\varepsilon
}\left(  0\right)  \right\rangle +\left\langle \widehat{Z}_{0},\sigma
^{\varepsilon}\left(  0\right)  -\widetilde{\sigma}^{\varepsilon}\left(
0\right)  \right\rangle \\
& +\mathbb{E}\sum_{t=0}^{T}\left[  \left\langle \widehat{X}_{t},-\widetilde{f}%
^{\varepsilon}\left(  t\right)  +\overline{f}\left(  t\right)  \right\rangle
+\left\langle \widehat{Y}_{t},\widetilde{b}^{\varepsilon}\left(  t\right)
-\overline{b}\left(  t\right)  \right\rangle +\left\langle \widehat{Z}%
_{t},\widetilde{\sigma}^{\varepsilon}\left(  t\right)  -\overline{\sigma
}\left(  t\right)  \right\rangle \right] \\
= & \mathbb{E}\sum_{t=1}^{T-1}\left\langle A\left(  t,\lambda_{t}%
^{\varepsilon};u_{t}^{\varepsilon}\right)  -A\left(  t,\overline{\lambda}%
_{t};u_{t}^{\varepsilon}\right)  ,\widehat{\lambda}_{t}\right\rangle \\
& +\mathbb{E}\left\langle \widehat{X}_{T},-f^{\varepsilon}\left(  T\right)
+\widetilde{f}^{\varepsilon}\left(  T\right)  \right\rangle +\left\langle
\widehat{Y}_{0},b^{\varepsilon}\left(  0\right)  -\widetilde{b}^{\varepsilon
}\left(  0\right)  \right\rangle +\left\langle \widehat{Z}_{0},\sigma
^{\varepsilon}\left(  0\right)  -\widetilde{\sigma}^{\varepsilon}\left(
0\right)  \right\rangle \\
& +\mathbb{E}\left[  \left\langle \widehat{X}_{s},-\widetilde{f}^{\varepsilon
}\left(  s\right)  +\overline{f}\left(  s\right)  \right\rangle +\left\langle
\widehat{Y}_{s},\widetilde{b}^{\varepsilon}\left(  s\right)  -\overline
{b}\left(  s\right)  \right\rangle +\left\langle \widehat{Z}_{s}%
,\widetilde{\sigma}^{\varepsilon}\left(  s\right)  -\overline{\sigma}\left(
s\right)  \right\rangle \right]  .
\end{array}
\]
By the monotone condition, we obtain%
\begin{equation}%
\begin{array}
[c]{cl}
& \mathbb{E}\left[  \left\langle \widehat{X}_{s},-\widetilde{f}^{\varepsilon
}\left(  s\right)  +\overline{f}\left(  s\right)  \right\rangle +\left\langle
\widehat{Y}_{s},\widetilde{b}^{\varepsilon}\left(  s\right)  -\overline
{b}\left(  s\right)  \right\rangle +\left\langle \widehat{Z}_{s}%
,\widetilde{\sigma}^{\varepsilon}\left(  s\right)  -\overline{\sigma}\left(
s\right)  \right\rangle \right] \\
\geq & \alpha\mathbb{E}\left[  \sum_{t=0}^{T}\left\vert \widehat{X}%
_{t}\right\vert ^{2}+\sum_{t=0}^{T}\left\vert \widehat{Y}_{t}\right\vert
^{2}+\sum_{t=0}^{T-1}\left\vert \widehat{Z}_{t}\right\vert ^{2}\right]  .
\end{array}
\label{f_c_c_1_difference_2}%
\end{equation}
On the other hand,%
\begin{align*}
&  \;\;\mathbb{E}\left\langle \widehat{X}_{s},-\widetilde{f}^{\varepsilon
}\left(  s\right)  +\overline{f}\left(  s\right)  \right\rangle \\
&  \leq\frac{\alpha}{2}\mathbb{E}\left\vert \widehat{X}_{s}\right\vert
^{2}+\frac{1}{2\alpha}\mathbb{E}\left\vert \overline{f}\left(  s\right)
-\widetilde{f}^{\varepsilon}\left(  s\right)  \right\vert ^{2}\\
&  \leq\frac{\alpha}{2}\mathbb{E}\left\vert \widehat{X}_{s}\right\vert
^{2}+\frac{C}{2\alpha}\varepsilon^{2}\mathbb{E}\left\vert \Delta v\right\vert
^{2}%
\end{align*}
and similarly,%
\begin{equation}%
\begin{array}
[c]{cl}
& \mathbb{E}\left[  \left\langle \widehat{X}_{s},-\widetilde{f}^{\varepsilon
}\left(  s\right)  +\overline{f}\left(  s\right)  \right\rangle +\left\langle
\widehat{Y}_{s},\widetilde{b}^{\varepsilon}\left(  s\right)  -\overline
{b}\left(  s\right)  \right\rangle +\left\langle \widehat{Z}_{s}%
,\widetilde{\sigma}^{\varepsilon}\left(  s\right)  -\overline{\sigma}\left(
s\right)  \right\rangle \right] \\
\leq & \frac{\alpha}{2}\mathbb{E}\left[  \left\vert \widehat{X}_{s}\right\vert
^{2}+\left\vert \widehat{Y}_{s}\right\vert ^{2}+\left\vert \widehat{Z}%
_{s}\right\vert ^{2}\right]  +C\varepsilon^{2}\mathbb{E}\left\vert \Delta
v\right\vert ^{2}.
\end{array}
\label{f_c_c_1_difference_3}%
\end{equation}
Combining (\ref{f_c_c_1_difference_2}) and (\ref{f_c_c_1_difference_3}), we
have%
\[
\mathbb{E}\left[  \sum_{t=0}^{T}\left\vert \widehat{X}_{t}\right\vert
^{2}+\sum_{t=0}^{T}\left\vert \widehat{Y}_{t}\right\vert ^{2}+\sum_{t=0}%
^{T-1}\left\vert \widehat{Z}_{t}\right\vert ^{2}\right]  \leq C\varepsilon
^{2}\mathbb{E}\left\vert \Delta v\right\vert ^{2}.
\]
This completes the proof.
\end{proof}

Next we introduce the following variational equation:%
\begin{equation}
\left\{
\begin{array}
[c]{rcl}%
\Delta\xi_{t} & = & b_{x}\left(  t\right)  \xi_{t}+b_{y}\left(  t\right)
\eta_{t}+b_{z}\left(  t\right)  \zeta_{t}+\delta_{ts}b_{u}\left(  t\right)
\varepsilon\Delta v\\
&  & +\left[  \sigma_{x}\left(  t\right)  \xi_{t}+\sigma_{y}\left(  t\right)
\eta_{t}+\sigma_{z}\left(  t\right)  \zeta_{t}+\delta_{ts}\varepsilon
\sigma_{u}\left(  t\right)  \Delta v\right]  \Delta W_{t},\\
\Delta\eta_{t} & = & -f_{x}\left(  t+1\right)  \xi_{t+1}-f_{y}\left(
t+1\right)  \eta_{t+1}-f_{z}\left(  t+1\right)  \zeta_{t+1}\\
&  & -\delta_{\left(  t+1\right)  s}f_{u}\left(  t+1\right)  \varepsilon\Delta
v+\zeta_{t}\Delta W_{t}+\Delta V_{t},\\
\xi_{0} & = & 0,\\
\eta_{T} & = & 0.
\end{array}
\right.  \label{f_c_c_1_variation}%
\end{equation}

By Assumption \ref{control_assumption} and Assumption
\ref{control_assumption2}, when $t\in\left\{  1,...,T-1\right\}  $,%
\begin{equation}
\left(
\begin{array}
[c]{ccc}%
-f_{x}\left(  t\right)  & -f_{y}\left(  t\right)  & -f_{z}\left(  t\right) \\
b_{x}\left(  t\right)  & b_{y}\left(  t\right)  & b_{z}\left(  t\right) \\
\sigma_{x}\left(  t\right)  & \sigma_{y}\left(  t\right)  & \sigma_{z}\left(
t\right)
\end{array}
\right)  \leq-\alpha I_{3n},\text{ }P-a.s.; \label{f_c_c_1_monotone_1}%
\end{equation}

when $t=0$,%
\begin{equation}
\left(
\begin{array}
[c]{cc}%
b_{y}\left(  0\right)  & b_{z}\left(  0\right) \\
\sigma_{y}\left(  0\right)  & \sigma_{z}\left(  0\right)
\end{array}
\right)  \leq-\alpha I_{2n},\text{ }P-a.s.; \label{f_c_c_1_monotone_2}%
\end{equation}

when $t=T$,%
\begin{equation}
-f_{x}\left(  T\right)  \leq-\alpha I_{n},\text{ }P-a.s..
\label{f_c_c_1_monotone_3}%
\end{equation}

Thus, the coefficients of\thinspace(\ref{f_c_c_1_variation}) satisfy the
monotone condition and there exists a unique solution $\left(  \xi,\eta
,\zeta,V\right)  $ to\thinspace(\ref{f_c_c_1_variation}). Similar to the proof
of Lemma \ref{f_c_c_1_xyz_approxi}, we have%
\begin{equation}
\mathbb{E}\left[  \sum_{t=0}^{T}\left\vert \xi_{t}\right\vert ^{2}+\sum
_{t=0}^{T}\left\vert \eta_{t}\right\vert ^{2}+\sum_{t=0}^{T-1}\left\vert
\zeta_{t}\right\vert ^{2}\right]  \leq C\varepsilon^{2}\mathbb{E}\left\vert
\Delta v\right\vert ^{2}. \label{f_c_c_1_variation_approxi}%
\end{equation}

Define%
\[
\widetilde{\varphi}_{\mu}\left(  t\right)  =\int_{0}^{1}\varphi_{\mu}t,\bar
{X}_{t}+\lambda\left(  X_{t}^{\varepsilon}-\bar{X}_{t}\right)  ,\bar{Y}%
_{t}+\lambda\left(  Y_{t}^{\varepsilon}-\bar{Y}_{t}\right)  ,\bar{Z}%
_{t}+\lambda\left(  Z_{t}^{\varepsilon}-\bar{Z}_{t}\right)  ,\bar{u}%
_{t}+\lambda\left(  u_{t}^{\varepsilon}-\bar{u}_{t}\right)  d\lambda
\]

where $\varphi=b$, $\sigma_{i}$, $g$, $f$, $l$, $h$ and $\mu=x$, $y$, $z_{i}$
and $u$.

\begin{lemma}
\label{f_c_c_1_variation_difference_approxi_0}Under Assumption
\ref{control_assumption} and Assumption \ref{control_assumption2}, we have%
\[
\mathbb{E}\left[  \sum_{t=0}^{T}\left\vert \widehat{X}_{t}-\xi_{t}\right\vert
^{2}+\sum_{t=0}^{T}\left\vert \widehat{Y}_{t}-\eta_{t}\right\vert ^{2}%
+\sum_{t=0}^{T-1}\left\vert \widehat{Z}_{t}-\zeta_{t}\right\vert ^{2}\right]
=o\left(  \varepsilon^{2}\right)  .
\]

\end{lemma}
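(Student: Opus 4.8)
The plan is to rerun the energy argument from the proof of Lemma~\ref{f_c_c_1_xyz_approxi}, but applied to the \emph{error} processes
\[
\tilde X_t=\widehat X_t-\xi_t,\quad \tilde Y_t=\widehat Y_t-\eta_t,\quad \tilde Z_t=\widehat Z_t-\zeta_t,\quad \tilde N_t=\widehat N_t-V_t,
\]
the last of which enters only through martingale orthogonality. First I would write the system solved by $(\tilde X,\tilde Y,\tilde Z,\tilde N)$. By the fundamental theorem of calculus along the segment joining $(\bar X_t,\bar Y_t,\bar Z_t,\bar u_t)$ and $(X^\varepsilon_t,Y^\varepsilon_t,Z^\varepsilon_t,u^\varepsilon_t)$ one has $b^\varepsilon(t)-\bar b(t)=\widetilde b_x(t)\widehat X_t+\widetilde b_y(t)\widehat Y_t+\widetilde b_z(t)\widehat Z_t+\delta_{ts}\widetilde b_u(t)\varepsilon\Delta v$, and likewise for $\sigma$ and $f$. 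Subtracting the variational equation~(\ref{f_c_c_1_variation}) and using $\widetilde b_x(t)\widehat X_t-b_x(t)\xi_t=\widetilde b_x(t)\tilde X_t+(\widetilde b_x(t)-b_x(t))\xi_t$ (and similarly in the $y,z$ slots), the error system becomes a forward-backward difference system whose coefficient is the integrated Jacobian $\widetilde A(t)$, with rows $(-\widetilde f_x,-\widetilde f_y,-\widetilde f_z)$, $(\widetilde b_x,\widetilde b_y,\widetilde b_z)$, $(\widetilde\sigma_x,\widetilde\sigma_y,\widetilde\sigma_z)$, acting on $\tilde\lambda_t=(\tilde X_t,\tilde Y_t,\tilde Z_t)$, plus inhomogeneous remainders
\[
\rho^\varphi_t=(\widetilde\varphi_x(t)-\varphi_x(t))\xi_t+(\widetilde\varphi_y(t)-\varphi_y(t))\eta_t+(\widetilde\varphi_z(t)-\varphi_z(t))\zeta_t+\delta_{ts}(\widetilde\varphi_u(t)-\varphi_u(t))\varepsilon\Delta v
\]
for $\varphi=b,\sigma,f$, together with $\tilde X_0=0$ and $\tilde Y_T=0$.

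Second, I would show that $\mathbb{E}\sum_t(|\rho^b_t|^2+|\rho^\sigma_t|^2+|\rho^f_t|^2)=o(\varepsilon^2)$. The key is that the variational solution scales linearly: since~(\ref{f_c_c_1_variation}) is linear and its only forcing is $\delta_{ts}(\cdot)\varepsilon\Delta v$, one has $(\xi_t,\eta_t,\zeta_t)=\varepsilon(\hat\xi_t,\hat\eta_t,\hat\zeta_t)$ with $(\hat\xi,\hat\eta,\hat\zeta)$ independent of $\varepsilon$ and square integrable. Dividing $|\rho^\varphi_t|^2$ by $\varepsilon^2$ then leaves factors such as $\|\widetilde\varphi_x(t)-\varphi_x(t)\|^2\,|\hat\xi_t|^2$. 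By the continuity of the derivatives together with Lemma~\ref{f_c_c_1_xyz_approxi}, which forces $\widehat X_t,\widehat Y_t,\widehat Z_t\to0$ and $u^\varepsilon_t\to\bar u_t$ as $\varepsilon\to0$, each difference $\widetilde\varphi_\mu(t)-\varphi_\mu(t)$ converges to zero in probability while remaining uniformly bounded by Assumption~\ref{control_assumption}. As the companion factor $|\hat\xi_t|^2+|\hat\eta_t|^2+|\hat\zeta_t|^2+|\Delta v|^2$ is a fixed integrable random variable, a uniform-integrability argument yields $\varepsilon^{-2}\mathbb{E}\sum_t|\rho^\varphi_t|^2\to0$.

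Third, I would run the product-rule computation $0=\mathbb{E}\langle\tilde X_T,\tilde Y_T\rangle-\mathbb{E}\langle\tilde X_0,\tilde Y_0\rangle=\mathbb{E}\sum_{t=0}^{T-1}\Delta\langle\tilde X_t,\tilde Y_t\rangle$, expanding $\Delta\langle\tilde X_t,\tilde Y_t\rangle=\langle\tilde X_{t+1},\Delta\tilde Y_t\rangle+\langle\Delta\tilde X_t,\tilde Y_t\rangle$ exactly as in Lemma~\ref{f_c_c_1_xyz_approxi}. The martingale cross terms vanish in expectation or convert into the $\widetilde\sigma$-type quadratic terms, leaving
\[
0=\mathbb{E}\sum_t\langle\widetilde A(t)\tilde\lambda_t,\tilde\lambda_t\rangle+\mathbb{E}\sum_t\langle R_t,\tilde\lambda_t\rangle,
\]
with $R_t=(-\rho^f_t,\rho^b_t,\rho^\sigma_t)$ and the endpoints $t=0,T$ handled through~(\ref{f_c_c_1_monotone_2}) and~(\ref{f_c_c_1_monotone_3}). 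The quadratic term is controlled because $\widetilde A(t)$ is an average over the segment of the Jacobian of $A$: letting $\lambda_1\to\lambda_2$ in Assumption~\ref{control_assumption2} shows the Jacobian satisfies $\langle A'(t,\lambda;u)v,v\rangle\le-\alpha|v|^2$ for every admissible control and every $v$, and since each intermediate control $\bar u_t+\lambda\delta_{ts}\varepsilon\Delta v$ lies in $U_t$ by convexity, one gets $\langle\widetilde A(t)\tilde\lambda_t,\tilde\lambda_t\rangle\le-\alpha|\tilde\lambda_t|^2$. Hence $\alpha\mathbb{E}\sum_t|\tilde\lambda_t|^2\le\mathbb{E}\sum_t\langle R_t,\tilde\lambda_t\rangle\le\frac{\alpha}{2}\mathbb{E}\sum_t|\tilde\lambda_t|^2+\frac{1}{2\alpha}\mathbb{E}\sum_t|R_t|^2$ by Young's inequality, so $\frac{\alpha}{2}\mathbb{E}\sum_t|\tilde\lambda_t|^2\le\frac{1}{2\alpha}\mathbb{E}\sum_t|R_t|^2=o(\varepsilon^2)$ by the second step, which is the claim.

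The main obstacle is the second step: each remainder carries a factor $\widetilde\varphi_\mu(t)-\varphi_\mu(t)$ that tends to $0$ but multiplies a quantity of exact order $\varepsilon$, and one must extract the gain $o(\varepsilon^2)$ rather than merely $O(\varepsilon^2)$. The linear rescaling $(\xi,\eta,\zeta)=\varepsilon(\hat\xi,\hat\eta,\hat\zeta)$ decouples the two effects and makes the uniform-integrability passage legitimate. A secondary point requiring care is that the monotone quadratic bound must hold along the \emph{perturbed} segment, that is at the intermediate controls and not only at $\bar u_t$; this forces the bounds~(\ref{f_c_c_1_monotone_1})--(\ref{f_c_c_1_monotone_3}) to be used in the uniform-in-$u$ form granted by Assumption~\ref{control_assumption2} rather than at the optimal trajectory alone.
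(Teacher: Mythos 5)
Your proof is correct, and it shares the paper's skeleton: the telescoping product rule $0=\mathbb{E}\sum_{t=0}^{T-1}\Delta\langle\widetilde X_t,\widetilde Y_t\rangle$ with $\widetilde X_t=\widehat X_t-\xi_t$, $\widetilde Y_t=\widehat Y_t-\eta_t$, coercivity of the linear part via the monotone condition, Young's inequality, and a dominated-convergence argument for the remainders. What is genuinely different is the splitting of the linearization error. The paper freezes the coefficients at the optimal trajectory, so its error system has $b_x(t),\dots,f_z(t)$ acting on $\widetilde\lambda_t$, while its remainders $\Lambda_1,\Lambda_2,\Lambda_3$ pair $\widetilde\varphi_\mu(t)-\varphi_\mu(t)$ with the perturbations $\widehat X_t,\widehat Y_t,\widehat Z_t$; coercivity is then needed only in the pointwise form (\ref{f_c_c_1_monotone_1})--(\ref{f_c_c_1_monotone_3}) at $(\bar\lambda_t,\bar u_t)$, and the remainders are controlled through Lemma \ref{f_c_c_1_xyz_approxi}. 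You instead keep the integrated Jacobian $\widetilde A(t)$ as the coefficient of $\widetilde\lambda_t$ and pair $\widetilde\varphi_\mu(t)-\varphi_\mu(t)$ with the variational processes $\xi_t,\eta_t,\zeta_t$. This costs you a stronger coercivity statement --- the Jacobian bound along the whole segment and at the intermediate controls $\bar u_t+\lambda\delta_{ts}\varepsilon\Delta v$ --- which you justify correctly: those controls are admissible by convexity of $U_t$, and Assumption \ref{control_assumption2} is stated for every admissible control, so the bound survives averaging in $\lambda$. In exchange you gain a genuinely cleaner remainder estimate: by the exact $\varepsilon$-homogeneity of the linear variational equation, $(\xi,\eta,\zeta)=\varepsilon(\hat\xi,\hat\eta,\hat\zeta)$ with $(\hat\xi,\hat\eta,\hat\zeta)$ independent of $\varepsilon$, so $\varepsilon^{-2}|\rho^\varphi_t|^2$ is dominated by a fixed integrable random variable and dominated convergence applies directly; the paper's corresponding step must instead bound $\mathbb{E}\left[\left\Vert \widetilde f_\mu(t)-f_\mu(t)\right\Vert^2\left\vert\widehat X_t\right\vert^2\right]$, where both factors move with $\varepsilon$, which strictly speaking requires uniform integrability of $\left\vert\widehat X_t/\varepsilon\right\vert^2$ beyond the second-moment bound of Lemma \ref{f_c_c_1_xyz_approxi}. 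Note, as you acknowledge, that you still need Lemma \ref{f_c_c_1_xyz_approxi} to conclude $\widetilde\varphi_\mu(t)-\varphi_\mu(t)\to 0$, since the integrated Jacobians run along the segment toward the perturbed trajectory, so your route does not bypass that lemma.
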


\begin{proof}
Note that%
\begin{align*}
&  \varphi^{\varepsilon}\left(  t\right)  -\overline{\varphi}\left(  t\right)
\\
&  =\widetilde{\varphi}_{x}\left(  t\right)  \left(  X_{t}^{\varepsilon}%
-\bar{X}_{t}\right)  +\widetilde{\varphi}_{y}\left(  t\right)  \left(
Y_{t}^{\varepsilon}-\bar{Y}_{t}\right)  +\widetilde{\varphi}_{z}\left(
t\right)  \left(  Z_{t}^{\varepsilon}-\bar{Z}_{t}\right)  +\delta
_{ts}\widetilde{\varphi}_{u}\left(  t\right)  \varepsilon\Delta v.
\end{align*}
Set%
\[
\widetilde{X}_{t}=\widehat{X}_{t}-\xi_{t},\text{ }\widetilde{Y}_{t}%
=\widehat{Y}_{t}-\eta_{t},\text{ }\widetilde{Z}_{t}=\widehat{Z}_{t}-\zeta
_{t},\text{ }\widetilde{N}_{t}=\widehat{N}_{t}-V_{t}.
\]
Then,%
\begin{equation}
\left\{
\begin{array}
[c]{rcl}%
\Delta\widetilde{X}_{t} & = & b_{x}\left(  t\right)  \widetilde{X}_{t}%
+b_{y}\left(  t\right)  \widetilde{Y}_{t}+b_{z}\left(  t\right)
\widetilde{Z}_{t}+\Lambda_{1}\left(  t\right) \\
&  & +\left[  \sigma_{x}\left(  t\right)  \widetilde{X}_{t}+\sigma_{y}\left(
t\right)  \widetilde{Y}_{t}+\sigma_{z}\left(  t\right)  \widetilde{Z}%
_{t}+\Lambda_{2}\left(  t\right)  \right]  \Delta W_{t},\\
\Delta\widetilde{Y}_{t} & = & -f_{x}\left(  t+1\right)  \widetilde{X}%
_{t+1}-f_{y}\left(  t+1\right)  \widetilde{Y}_{t+1}-f_{z}\left(  t+1\right)
\widetilde{Z}_{t+1}\\
&  & -\Lambda_{3}\left(  t+1\right)  +\widetilde{Z}_{t}\Delta W_{t}%
+\Delta\widetilde{N}_{t},\\
\widetilde{X}_{0} & = & 0,\\
\widetilde{Y}_{T} & = & 0,
\end{array}
\right.  \label{f_c_c_1_variation_difference_approxi}%
\end{equation}
where%
\begin{align*}
\Lambda_{1}\left(  t\right)   &  =\left(  \widetilde{b}_{x}\left(  t\right)
-b_{x}\left(  t\right)  \right)  \widehat{X}_{t}+\left(  \widetilde{b}%
_{y}\left(  t\right)  -b_{y}\left(  t\right)  \right)  \widehat{Y}_{t}\\
&  +\left(  \widetilde{b}_{z}\left(  t\right)  -b_{z}\left(  t\right)
\right)  \widehat{Z}_{t}+\delta_{ts}\left(  \widetilde{b}_{u}\left(  t\right)
-b_{u}\left(  t\right)  \right)  \varepsilon\Delta v,\\
\Lambda_{2}\left(  t\right)   &  =\left(  \widetilde{\sigma}_{x}\left(
t\right)  -\sigma_{x}\left(  t\right)  \right)  \widehat{X}_{t}+\left(
\widetilde{\sigma}_{y}\left(  t\right)  -\sigma_{y}\left(  t\right)  \right)
\widehat{Y}_{t}\\
&  +\left(  \widetilde{\sigma}_{z}\left(  t\right)  -\sigma_{z}\left(
t\right)  \right)  \widehat{Z}_{t}+\delta_{ts}\left(  \widetilde{\sigma}%
_{u}\left(  t\right)  -\sigma_{u}\left(  t\right)  \right)  \varepsilon\Delta
v,\\
\Lambda_{3}\left(  t\right)   &  =-\left(  \widetilde{f}_{x}\left(  t\right)
-f_{x}\left(  t\right)  \right)  \widehat{X}_{t}-\left(  \widetilde{f}%
_{y}\left(  t\right)  -f_{y}\left(  t\right)  \right)  \widehat{Y}_{t}\\
&  -\left(  \widetilde{f}_{z}\left(  t\right)  -f_{z}\left(  t\right)
\right)  \widehat{Z}_{t}-\delta_{ts}\left(  \widetilde{f}_{u}\left(  t\right)
-f_{u}\left(  t\right)  \right)  \varepsilon\Delta v.
\end{align*}
According to\thinspace(\ref{f_c_c_1_variation_difference_approxi}),%
\[%
\begin{array}
[c]{rl}%
0= & \mathbb{E}\left\langle \widetilde{X}_{T},\widetilde{Y}_{T}\right\rangle
-\mathbb{E}\left\langle \widetilde{X}_{0},\widetilde{Y}_{0}\right\rangle \\
= & \mathbb{E}\sum_{t=0}^{T-1}\Delta\left\langle \widetilde{X}_{t}%
,\widetilde{Y}_{t}\right\rangle \\
= & \mathbb{E}\sum_{t=0}^{T}\left[  \left\langle \widetilde{X}_{t}%
,-f_{\lambda}\left(  t\right)  \widetilde{\lambda}_{t}\right\rangle
+\left\langle \widetilde{Y}_{t},b_{\lambda}\left(  t\right)
\widetilde{\lambda}_{t}\right\rangle +\left\langle \widetilde{Z}_{t}%
,\sigma_{\lambda}\left(  t\right)  \widetilde{\lambda}_{t}\right\rangle
\right] \\
& +\mathbb{E}\sum_{t=0}^{T}\left[  \left\langle \widetilde{X}_{t},-\Lambda
_{3}\left(  t\right)  \right\rangle +\left\langle \widetilde{Y}_{t}%
,\Lambda_{1}\left(  t\right)  \right\rangle +\left\langle \widetilde{Z}%
_{t},\Lambda_{2}\left(  t\right)  \right\rangle \right]
\end{array}
\]
where%
\begin{align*}
\widetilde{\lambda}_{t}  &  =\left(  \widetilde{X}_{t}^{\ast},\widetilde{Y}%
_{t}^{\ast},\widetilde{Z}_{t}^{\ast}\right)  ^{\ast},\\
b_{\lambda}\left(  t\right)   &  =\left(  b_{x}\left(  t\right)  ,b_{y}\left(
t\right)  ,b_{z}\left(  t\right)  \right)  ,\\
\sigma_{\lambda}\left(  t\right)   &  =\left(  \sigma_{x}\left(  t\right)
,\sigma_{y}\left(  t\right)  ,\sigma_{z}\left(  t\right)  \right)  ,\\
f_{\lambda}\left(  t\right)   &  =\left(  f_{x}\left(  t\right)  ,f_{y}\left(
t\right)  ,f_{z}\left(  t\right)  \right)  .
\end{align*}
Combining (\ref{f_c_c_1_monotone_1}), (\ref{f_c_c_1_monotone_2}) and
(\ref{f_c_c_1_monotone_3}), we have%
\begin{equation}%
\begin{array}
[c]{cl}
& \mathbb{E}\sum_{t=0}^{T}\left[  \left\langle \widetilde{X}_{t},-\Lambda
_{3}\left(  t\right)  \right\rangle +\left\langle \widetilde{Y}_{t}%
,\Lambda_{1}\left(  t\right)  \right\rangle +\left\langle \widetilde{Z}%
_{t},\Lambda_{2}\left(  t\right)  \right\rangle \right] \\
\geq & \alpha\mathbb{E}\left[  \sum_{t=0}^{T}\left\vert \widetilde{X}%
_{t}\right\vert ^{2}+\sum_{t=0}^{T}\left\vert \widetilde{Y}_{t}\right\vert
^{2}+\sum_{t=0}^{T-1}\left\vert \widetilde{Z}_{t}\right\vert ^{2}\right]  .
\end{array}
\label{f_c_c_1_variation_difference_1}%
\end{equation}
Note that
\[%
\begin{array}
[c]{rl}
& \mathbb{E}\left\langle \widetilde{X}_{t},-\Lambda_{3}\left(  t\right)
\right\rangle \\
= & \mathbb{E}\left\langle \widetilde{X}_{t},\left(  \widetilde{f}_{x}\left(
t\right)  -f_{x}\left(  t\right)  \right)  \widehat{X}_{t}\right\rangle
+\mathbb{E}\left\langle \widetilde{X}_{t},\left(  \widetilde{f}_{y}\left(
t\right)  -f_{y}\left(  t\right)  \right)  \widehat{Y}_{t}\right\rangle \\
& +\mathbb{E}\left\langle \widetilde{X}_{t},\left(  \widetilde{f}_{z}\left(
t\right)  -f_{z}\left(  t\right)  \right)  \widehat{Z}_{t}\right\rangle
+\mathbb{E}\left\langle \widetilde{X}_{t},\delta_{ts}\left(  \widetilde{f}%
_{u}\left(  t\right)  -f_{u}\left(  t\right)  \right)  \varepsilon\Delta
v\right\rangle \\
\leq & \frac{\alpha}{2}\mathbb{E}\left\vert \widetilde{X}_{t}\right\vert
^{2}+\frac{2}{\alpha}\mathbb{E}\left[  \left\Vert \widetilde{f}_{x}\left(
t\right)  -f_{x}\left(  t\right)  \right\Vert ^{2}\left\vert \widehat{X}%
_{t}\right\vert ^{2}+\left\Vert \widetilde{f}_{y}\left(  t\right)
-f_{y}\left(  t\right)  \right\Vert ^{2}\left\vert \widehat{Y}_{t}\right\vert
^{2}\right] \\
& +\frac{2}{\alpha}\mathbb{E}\left[  \left\Vert \widetilde{f}_{z}\left(
t\right)  -f_{z}\left(  t\right)  \right\Vert ^{2}\left\vert \widehat{Z}%
_{t}\right\vert ^{2}+\delta_{ts}\varepsilon^{2}\left\Vert \widetilde{f}%
_{u}\left(  t\right)  -f_{u}\left(  t\right)  \right\Vert ^{2}\left\vert
\Delta v\right\vert ^{2}\right]  .
\end{array}
\]
When $\varepsilon\rightarrow0$, $\left\Vert \widetilde{f}_{\mu}\left(
t\right)  -f_{\mu}\left(  t\right)  \right\Vert \rightarrow0$\thinspace for
$\mu=x$, $y$, $z$ and $u$. Then, by Lemma \thinspace\ref{f_c_c_1_xyz_approxi},%
\[
\mathbb{E}\left\langle \widetilde{X}_{t},-\Lambda_{3}\left(  t\right)
\right\rangle \leq\frac{\alpha}{2}\mathbb{E}\left\vert \widetilde{X}%
_{t}\right\vert ^{2}+o\left(  \varepsilon^{2}\right)  .
\]
Similar results hold for the other terms in
(\ref{f_c_c_1_variation_difference_1}). Finally, we have%
\[
\mathbb{E}\left[  \sum_{t=0}^{T}\left\vert \widetilde{X}_{t}\right\vert
^{2}+\sum_{t=0}^{T}\left\vert \widetilde{Y}_{t}\right\vert ^{2}+\sum
_{t=0}^{T-1}\left\vert \widetilde{Z}_{t}\right\vert ^{2}\right]  \leq o\left(
\varepsilon^{2}\right)  .
\]
This completes the proof.
\end{proof}

By Lemma \ref{f_c_c_1_variation_difference_approxi_0}, we obtain%
\begin{align*}
&
\begin{array}
[c]{cc}
& J\left(  u^{\varepsilon}\left(  \cdot\right)  \right)  -J\left(  \bar
{u}\left(  \cdot\right)  \right)
\end{array}
\\
&
\begin{array}
[c]{cl}%
= & \mathbb{E}\sum_{t=0}^{T-1}\left[  \left\langle l_{x}\left(  t\right)
,\xi_{t}\right\rangle +\left\langle l_{y}\left(  t\right)  ,\eta
_{t}\right\rangle +\left\langle l_{z}\left(  t\right)  ,\zeta_{t}\right\rangle
+\delta_{ts}\left\langle l_{u}\left(  s\right)  ,\varepsilon\Delta
v\right\rangle \right]  +\mathbb{E}\left\langle h_{x}\left(  \bar{X}%
_{T}\right)  ,\xi_{T}\right\rangle +o\left(  \varepsilon\right)  .
\end{array}
\end{align*}

Introduce the following adjoint equation:%
\begin{equation}
\left\{
\begin{array}
[c]{rcl}%
\Delta p_{t} & = & -b_{x}^{\ast}\left(  t+1\right)  p_{t+1}-\sigma_{x}^{\ast
}\left(  t+1\right)  q_{t+1}\\
&  & +f_{x}^{\ast}\left(  t+1\right)  k_{t+1}+l_{x}\left(  t+1\right)
+q_{t}\Delta W_{t}+\Delta Q_{t},\\
\Delta k_{t} & = & f_{y}^{\ast}\left(  t\right)  k_{t}-b_{y}^{\ast}\left(
t\right)  p_{t}-\sigma_{y}^{\ast}\left(  t\right)  q_{t}+l_{y}\left(  t\right)
\\
&  & +\left[  f_{z}^{\ast}\left(  t\right)  k_{t}-b_{z}^{\ast}\left(
t\right)  p_{t}-\sigma_{z}^{\ast}\left(  t\right)  q_{t}+l_{z}\left(
t\right)  \right]  \Delta W_{t},\\
p_{T} & = & -h_{x}\left(  \bar{X}_{T}\right)  ,\\
k_{0} & = & 0.
\end{array}
\right.  \label{adjoint_eq}%
\end{equation}

Define the Hamiltonian function as follows:%
\[%
\begin{array}
[c]{r}%
H\left(  \omega,t,u,x,y,z,p,q,k\right)  =b^{\ast}\left(  \omega
,t,x,y,z,u\right)  p+\sum_{i=1}^{d}\sigma_{i}^{\ast}\left(  \omega
,t,x,y,z,u\right)  qe_{i}\\
-f^{\ast}\left(  \omega,t,x,y,z,u\right)  k-l\left(  \omega,t,x,y,z,u\right)
.
\end{array}
\]

\begin{theorem}
Suppose that Assumption \ref{control_assumption} and Assumption
\ref{control_assumption2} hold. Let $\bar{u}$ be an optimal control
for\ (\ref{fbsde_state_eq})-(\ref{fbsde_state_eq}), $\left(  \bar{X},\bar
{Y},\bar{Z}\right)  $\ be the corresponding optimal trajectory and $\left(
p,q,k\right)  $ be the solution to the adjoint equation\ (\ref{adjoint_eq}).
Then, for any $t\in\left\{  0,1,...,T\right\}  $ and any\ $v\in U_{t}$, we
have%
\begin{equation}
\left\langle H_{u}\left(  t,\bar{u}_{t},\bar{X}_{t},\bar{Y}_{t},\bar{Z}%
_{t},p_{t},q_{t},k_{t}\right)  ,v-\bar{u}_{t}\right\rangle \leq0,\text{
}P-a.s.. \label{MP_result}%
\end{equation}

\end{theorem}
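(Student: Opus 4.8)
The plan is to follow the same duality scheme used in the partially coupled case of Section 3, now pairing the variational system (\ref{f_c_c_1_variation}) against the adjoint system (\ref{adjoint_eq}). Lemma \ref{f_c_c_1_variation_difference_approxi_0} already reduces the cost difference to its first-order part,
\[
J(u^\varepsilon(\cdot)) - J(\bar u(\cdot)) = \mathbb{E}\sum_{t=0}^{T-1}\left[\langle l_x(t),\xi_t\rangle + \langle l_y(t),\eta_t\rangle + \langle l_z(t),\zeta_t\rangle + \delta_{ts}\langle l_u(s),\varepsilon\Delta v\rangle\right] + \mathbb{E}\langle h_x(\bar X_T),\xi_T\rangle + o(\varepsilon),
\]
so it suffices to re-express the $l$- and $h$-dependent terms through the adjoint variables $(p,q,k)$. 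To do this I would compute $\Delta\langle \xi_t,p_t\rangle$ and $\Delta\langle \eta_t,k_t\rangle$ via the product rule $\Delta\langle X_t,Y_t\rangle = \langle X_{t+1},\Delta Y_t\rangle + \langle \Delta X_t,Y_t\rangle$ (taking the forward process in the $X$-slot), substitute the dynamics from (\ref{f_c_c_1_variation}) and (\ref{adjoint_eq}), sum over $t\in\{0,\dots,T-1\}$, and take expectation.

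On taking expectation, the genuine martingale increments $\Delta Q_t,\Delta V_t$ and the terms of the form $\langle\,\cdot\,,q_t\Delta W_t\rangle$ with an $\mathcal{F}_t$-measurable first slot vanish, while the quadratic $\Delta W_t$-cross terms survive through $\mathbb{E}[(\Delta W_t)^2\mid\mathcal{F}_t]=1$, exactly as the contributions collected in $\Phi_t$ and $\Psi_t$ in the partially coupled proof. After telescoping, the forward/forward pairs built from $b_x,\sigma_x$ and the backward/backward pairs built from $f_x,f_y,f_z$ cancel as in (\ref{product-rule-2})--(\ref{rearrangement}). Using $\xi_0=0$, $k_0=0$, $\eta_T=0$ and $p_T=-h_x(\bar X_T)$ to evaluate the boundary terms, I expect to reach the dual relation
\[
\mathbb{E}\sum_{t=0}^{T-1}\left[\langle l_x(t),\xi_t\rangle + \langle l_y(t),\eta_t\rangle + \langle l_z(t),\zeta_t\rangle\right] + \mathbb{E}\langle h_x(\bar X_T),\xi_T\rangle = -\varepsilon\,\mathbb{E}\langle b_u^\ast(s)p_s + \sigma_u^\ast(s)q_s - f_u^\ast(s)k_s,\Delta v\rangle,
\]
the exact analogue of (\ref{dual-relation}).

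The main obstacle, and the genuinely new feature relative to Section 3, is the coupling. In (\ref{f_c_c_1_variation}) the forward equation now carries the drift and diffusion terms $b_y\eta_t,b_z\zeta_t,\sigma_y\eta_t,\sigma_z\zeta_t$, which produce in $\langle\Delta\xi_t,p_t\rangle$ (and its surviving $\Delta W_t$-cross part) the quantities $\langle b_y\eta_t,p_t\rangle,\langle\sigma_y\eta_t,q_t\rangle,\langle b_z\zeta_t,p_t\rangle,\langle\sigma_z\zeta_t,q_t\rangle$, none of which occurred in the partially coupled computation. The crucial point, requiring the most care, is that the forward equation for $k$ in (\ref{adjoint_eq}) is built precisely to absorb them: its extra drift $-b_y^\ast(t)p_t-\sigma_y^\ast(t)q_t$ and diffusion coefficient $-b_z^\ast(t)p_t-\sigma_z^\ast(t)q_t$ contribute, through $\langle\eta_t,\Delta k_t\rangle$ and the matching $\Delta W_t$-cross term, exactly $-\langle b_y\eta_t,p_t\rangle-\langle\sigma_y\eta_t,q_t\rangle-\langle b_z\zeta_t,p_t\rangle-\langle\sigma_z\zeta_t,q_t\rangle$. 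Verifying that these coupling terms cancel identically, and that the surviving diffusion cross-terms are matched correctly in the one-dimensional $W$ setting adopted here, is where the argument departs substantively from Section 3.

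Finally, substituting the dual relation into the cost expansion collapses it to $J(u^\varepsilon(\cdot)) - J(\bar u(\cdot)) = -\varepsilon\,\mathbb{E}\langle b_u^\ast(s)p_s + \sigma_u^\ast(s)q_s - f_u^\ast(s)k_s - l_u(s),\Delta v\rangle + o(\varepsilon)$. Optimality of $\bar u$ gives $\lim_{\varepsilon\to 0}\varepsilon^{-1}[J(u^\varepsilon(\cdot))-J(\bar u(\cdot))]\geq 0$, whence $\mathbb{E}\langle H_u(s,\bar u_s,\bar X_s,\bar Y_s,\bar Z_s,p_s,q_s,k_s),\Delta v\rangle\leq 0$ for every $\Delta v\in L^2(\mathcal{F}_s;\mathbb{R}^r)$ with $\bar u_s+\Delta v\in U_s$. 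Choosing $\Delta v=(v-\bar u_s)\mathbf{1}_A$ for arbitrary $A\in\mathcal{F}_s$ and $v\in U_s$ removes the expectation and yields the pointwise inequality $P$-a.s.; since $s$ is arbitrary, (\ref{MP_result}) follows.
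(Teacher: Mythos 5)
Your proposal is correct and follows essentially the same route as the paper: the same first-order reduction via Lemma \ref{f_c_c_1_variation_difference_approxi_0}, the same product rule $\Delta\langle X_t,Y_t\rangle=\langle X_{t+1},\Delta Y_t\rangle+\langle\Delta X_t,Y_t\rangle$ applied to $\langle\xi_t,p_t\rangle$ and $\langle\eta_t,k_t\rangle$, the same cancellation of the coupling terms $b_y\eta_t,\ b_z\zeta_t,\ \sigma_y\eta_t,\ \sigma_z\zeta_t$ against the $-b_y^{\ast}p_t-\sigma_y^{\ast}q_t$ and $-b_z^{\ast}p_t-\sigma_z^{\ast}q_t$ terms in the $k$-equation, and the same telescoping, boundary, and optimality steps leading to the dual relation and then to (\ref{MP_result}). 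Your final localization with $\Delta v=(v-\bar u_s)\mathbf{1}_A$, $A\in\mathcal{F}_s$, is simply a more explicit rendering of the paper's concluding remark that $s$ is arbitrary.
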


\begin{proof}
From the expression of $\xi_{t}$, $p_{t}$ for $t\in\left\{
0,1,...,T-1\right\}  $, we have
\[%
\begin{array}
[c]{rl}%
\Delta\left\langle \xi_{t},p_{t}\right\rangle = & \left\langle \xi
_{t+1},\Delta p_{t}\right\rangle +\left\langle \Delta\xi_{t},p_{t}%
\right\rangle \\
= & \left\langle \xi_{t+1},-b_{x}^{\ast}\left(  t+1\right)  p_{t+1}-\sigma
_{x}^{\ast}\left(  t+1\right)  q_{t+1}+f_{x}^{\ast}\left(  t+1\right)
k_{t+1}+l_{x}\left(  t+1\right)  \right\rangle \\
& +\left\langle \left[  \sigma_{x}\left(  t\right)  \xi_{t}+\sigma_{y}\left(
t\right)  \eta_{t}+\sigma_{z}\left(  t\right)  \zeta_{t}+\delta_{ts}%
\varepsilon\sigma_{iu}\left(  t\right)  \Delta v\right]  \Delta W_{t}%
,q_{t}\Delta W_{t}\right\rangle \\
& +\left\langle b_{x}\left(  t\right)  \xi_{t}+b_{y}\left(  t\right)  \eta
_{t}+b_{z}\left(  t\right)  \zeta_{t}+\delta_{ts}b_{u}\left(  t\right)
\varepsilon\Delta v,p_{t}\right\rangle +\Phi_{t},
\end{array}
\]
where%
\[%
\begin{array}
[c]{ccl}%
\Phi_{t} & = & \left\langle \xi_{t}+b_{x}\left(  t\right)  \xi_{t}%
+b_{y}\left(  t\right)  \eta_{t}+b_{z}\left(  t\right)  \zeta_{t}+\delta
_{ts}b_{u}\left(  t\right)  \varepsilon\Delta v,q_{t}\Delta W_{t}\right\rangle
\\
&  & +\left\langle \left[  \sigma_{x}\left(  t\right)  \xi_{t}+\sigma
_{y}\left(  t\right)  \eta_{t}+\sigma_{z}\left(  t\right)  \zeta_{t}%
+\delta_{ts}\varepsilon\sigma_{iu}\left(  t\right)  \Delta v\right]  \Delta
W_{t},p_{t}\right\rangle \\
&  & +\left\langle \xi_{t}+b_{x}\left(  t\right)  \xi_{t}+b_{y}\left(
t\right)  \eta_{t}+b_{z}\left(  t\right)  \zeta_{t}+\delta_{ts}b_{u}\left(
t\right)  \varepsilon\Delta v,\Delta Q_{t}\right\rangle \\
&  & +\left\langle \left[  \sigma_{x}\left(  t\right)  \xi_{t}+\sigma
_{y}\left(  t\right)  \eta_{t}+\sigma_{z}\left(  t\right)  \zeta_{t}%
+\delta_{ts}\varepsilon\sigma_{iu}\left(  t\right)  \Delta v\right]  \Delta
W_{t},\Delta Q_{t}\right\rangle .
\end{array}
\]
Since $W$ and $Q$ are square integrable martingale processes and $Q$ is
strongly orthogonal to $W$, we have $\mathbb{E}\left[  \Phi_{t}\right]  =0$.
Similarly,
\[%
\begin{array}
[c]{rl}%
\Delta\left\langle \eta_{t},k_{t}\right\rangle = & \left\langle \Delta\eta
_{t},k_{t+1}\right\rangle +\left\langle \eta_{t},\Delta k_{t}\right\rangle \\
= & \left\langle -f_{x}\left(  t+1\right)  \xi_{t+1}-f_{y}\left(  t+1\right)
\eta_{t+1}-f_{z}\left(  t+1\right)  \zeta_{t+1}-\delta_{\left(  t+1\right)
s}f_{u}\left(  t+1\right)  \varepsilon\Delta v,k_{t+1}\right\rangle \\
& +\left\langle \zeta_{t}\Delta W_{t},\left[  f_{z}^{\ast}\left(  t\right)
k_{t}-b_{z}^{\ast}\left(  t\right)  p_{t}-\sigma_{z}^{\ast}\left(  t\right)
q_{t}+l_{z}\left(  t\right)  \right]  \Delta W_{t}\right\rangle \\
& +\left\langle \eta_{t},f_{y}^{\ast}\left(  t\right)  k_{t}-b_{y}^{\ast
}\left(  t\right)  p_{t}-\sigma_{y}^{\ast}\left(  t\right)  q_{t}+l_{y}\left(
t\right)  \right\rangle +\Psi_{t},
\end{array}
\]
where%
\[%
\begin{array}
[c]{ccl}%
\Psi_{t} & = & \left\langle \zeta_{t}\Delta W_{t},k_{t}+f_{y}^{\ast}\left(
t\right)  k_{t}-b_{y}^{\ast}\left(  t\right)  p_{t}-\sigma_{y}^{\ast}\left(
t\right)  q_{t}+l_{y}\left(  t\right)  \right\rangle \\
&  & +\left\langle \eta_{t},\left[  f_{z}^{\ast}\left(  t\right)  k_{t}%
-b_{z}^{\ast}\left(  t\right)  p_{t}-\sigma_{z}^{\ast}\left(  t\right)
q_{t}+l_{z}\left(  t\right)  \right]  \Delta W_{t}\right\rangle \\
&  & +\left\langle k_{t}+f_{y}^{\ast}\left(  t\right)  k_{t}-b_{y}^{\ast
}\left(  t\right)  p_{t}-\sigma_{y}^{\ast}\left(  t\right)  q_{t}+l_{y}\left(
t\right)  ,\Delta V_{t}\right\rangle \\
&  & +\left\langle \left[  f_{z}^{\ast}\left(  t\right)  k_{t}-b_{z}^{\ast
}\left(  t\right)  p_{t}-\sigma_{z}^{\ast}\left(  t\right)  q_{t}+l_{z}\left(
t\right)  \right]  \Delta W_{t},\Delta V_{t}\right\rangle .
\end{array}
\]
Furthermore,%
\begin{align*}
&  \;\;\mathbb{E}\left\langle \left[  \sigma_{x}\left(  t\right)  \xi
_{t}+\sigma_{y}\left(  t\right)  \eta_{t}+\sigma_{z}\left(  t\right)
\zeta_{t}+\delta_{ts}\varepsilon\sigma_{iu}\left(  t\right)  \Delta v\right]
\Delta W_{t},q_{t}\Delta W_{t}\right\rangle \\
&  =\mathbb{E}\left\langle \sigma_{x}\left(  t\right)  \xi_{t}+\sigma
_{y}\left(  t\right)  \eta_{t}+\sigma_{z}\left(  t\right)  \zeta_{t}%
+\delta_{ts}\varepsilon\sigma_{iu}\left(  t\right)  \Delta v,q_{t}%
\mathbb{E}\left[  \Delta W_{t}^{2}|\mathcal{F}_{t}\right]  \right\rangle \\
&  =\mathbb{E}\left[  \sum_{i=1}^{d}\left\langle \xi_{t},\sigma_{ix}^{\ast
}\left(  t\right)  q_{t}e_{i}\right\rangle \right]
\end{align*}
and%
\begin{align*}
&  \;\;\mathbb{E}\left\langle \zeta_{t}\Delta W_{t},\left[  f_{z}^{\ast
}\left(  t\right)  k_{t}-b_{z}^{\ast}\left(  t\right)  p_{t}-\sigma_{z}^{\ast
}\left(  t\right)  q_{t}+l_{z}\left(  t\right)  \right]  \Delta W_{t}%
\right\rangle \\
&  =\mathbb{E}\left\langle \zeta_{t},\left[  f_{z}^{\ast}\left(  t\right)
k_{t}-b_{z}^{\ast}\left(  t\right)  p_{t}-\sigma_{z}^{\ast}\left(  t\right)
q_{t}+l_{z}\left(  t\right)  \right]  \mathbb{E}\left[  \Delta W_{t}%
^{2}|\mathcal{F}_{t}\right]  \right\rangle \\
&  =\mathbb{E}\left\langle \sum_{j=1}^{d}\zeta_{t}e_{j}\Delta W_{t}^{j}%
,\sum_{i=1}^{d}f_{z_{i}}^{\ast}\left(  t\right)  k_{t}\Delta W_{t}%
^{i}\right\rangle \\
&  =\mathbb{E}\left[  \sum_{i=1}^{d}\left\langle f_{z_{i}}\left(  t\right)
\zeta_{t}e_{i},k_{t}\right\rangle \right]  .
\end{align*}

Then, we obtain%
\[%
\begin{array}
[c]{rl}
& \mathbb{E}\left[  \Delta\left(  \left\langle \xi_{t},p_{t}\right\rangle
+\left\langle \eta_{t},k_{t}\right\rangle \right)  \right] \\
= & \mathbb{E}\left\langle -b_{x}\left(  t+1\right)  \xi_{t+1},p_{t+1}%
\right\rangle +\left\langle b_{x}\left(  t\right)  \xi_{t},p_{t}\right\rangle
-\left\langle \xi_{t+1},\sigma_{x}^{\ast}\left(  t+1\right)  q_{t+1}%
\right\rangle +\left\langle \xi_{t},\sigma_{x}^{\ast}\left(  t\right)
q_{t}\right\rangle \\
& -\left\langle f_{y}\left(  t+1\right)  \eta_{t+1},k_{t+1}\right\rangle
+\left\langle f_{y}\left(  t\right)  \eta_{t},k_{t}\right\rangle -\left\langle
f_{z}\left(  t+1\right)  \zeta_{t+1},k_{t+1}\right\rangle +\left\langle
f_{z}\left(  t\right)  \zeta_{t},k_{t}\right\rangle \\
& +\left\langle l_{x}\left(  t+1\right)  ,\xi_{t+1}\right\rangle +\left\langle
l_{y}\left(  t\right)  ,\eta_{t}\right\rangle +\left\langle l_{z}\left(
t\right)  ,\zeta_{t}\right\rangle +\varepsilon\left\langle \delta_{ts}%
b_{u}\left(  t\right)  \Delta v,p_{t}\right\rangle +\delta_{ts}\varepsilon
\left\langle \sigma_{u}\left(  t\right)  \Delta v,q_{t}\right\rangle \\
& -\varepsilon\left\langle \delta_{\left(  t+1\right)  s}f_{u}\left(
t+1\right)  \Delta v,k_{t+1}\right\rangle .
\end{array}
\]
Therefore,%
\[%
\begin{array}
[c]{cl}
& -\mathbb{E}\left\langle h_{x}\left(  \bar{X}_{T}\right)  ,\xi_{T}%
\right\rangle \\
= & \mathbb{E}\left[  \left\langle \xi_{T},p_{T}\right\rangle +\left\langle
\eta_{T},k_{T}\right\rangle -\left\langle \xi_{0},p_{0}\right\rangle
-\left\langle \eta_{0},k_{0}\right\rangle \right] \\
= & \sum_{t=0}^{T-1}\mathbb{E}\Delta\left(  \left\langle \xi_{t}%
,p_{t}\right\rangle +\left\langle \eta_{t},k_{t}\right\rangle \right) \\
= & \mathbb{E}\left[  \left\langle b_{x}\left(  0\right)  \xi_{0}%
,p_{0}\right\rangle +\sum_{i=1}^{d}\left\langle \xi_{0},\sigma_{x}^{\ast
}\left(  0\right)  q_{0}\right\rangle +\left\langle f_{y}\left(  0\right)
\eta_{0},k_{0}\right\rangle +\left\langle f_{z}\left(  0\right)  \zeta
_{0},k_{0}\right\rangle \right] \\
& +\sum_{t=0}^{T-1}\mathbb{E}\left[  \left\langle l_{x}\left(  t\right)
,\xi_{t}\right\rangle +\left\langle l_{y}\left(  t\right)  ,\eta
_{t}\right\rangle +\left\langle l_{z}\left(  t\right)  ,\zeta_{t}\right\rangle
\right] \\
& +\sum_{t=0}^{T}\delta_{ts}\varepsilon\mathbb{E}\left[  \left\langle
b_{u}^{\ast}\left(  t\right)  p_{t},\Delta v\right\rangle +\left\langle
\sigma_{u}^{\ast}\left(  t\right)  q_{t},\Delta v\right\rangle -\left\langle
f_{u}^{\ast}\left(  t\right)  k_{t},\Delta v\right\rangle \right]  .
\end{array}
\]
Notice that $\xi_{0}=0$, $k_{0}=0$. So%
\[%
\begin{array}
[c]{cl}
& \mathbb{E}\sum_{t=0}^{T-1}\left[  \left\langle l_{x}\left(  t\right)
,\xi_{t}\right\rangle +\left\langle l_{y}\left(  t\right)  ,\eta
_{t}\right\rangle +\left\langle l_{z}\left(  t\right)  ,\zeta_{t}\right\rangle
\right]  +\mathbb{E}\left\langle h_{x}\left(  \bar{X}_{T}\right)  ,\xi
_{T}\right\rangle \\
= & -\varepsilon\mathbb{E}\left[  \left\langle b_{u}^{\ast}\left(  s\right)
p_{s}+\sigma_{u}^{\ast}\left(  s\right)  q_{s}-f_{u}^{\ast}\left(  s\right)
k_{s},\Delta v\right\rangle \right]  .
\end{array}
\]
Since $\lim_{\varepsilon\rightarrow0}\frac{1}{\varepsilon}\left[  J\left(
u^{\varepsilon}\left(  \cdot\right)  \right)  -J\left(  \bar{u}\left(
\cdot\right)  \right)  \right]  \geq0$, we obtain%
\[
\mathbb{E}\left[  \left\langle b_{u}^{\ast}\left(  s\right)  p_{s}+\sigma
_{u}^{\ast}\left(  s\right)  q_{s}-f_{u}^{\ast}\left(  s\right)  k_{s}%
-l_{u}\left(  s\right)  ,\Delta v\right\rangle \right]  \leq0.
\]
Then, (\ref{MP_result}) holds due to that $s$ is taking arbitrarily. This
completes the proof.
\end{proof}

\end{document}